\DeclarePairedDelimiterX{\Iintv}[1]{\llbracket}{\rrbracket}{\iintvargs{#1}}
\NewDocumentCommand{\iintvargs}{>{\SplitArgument{1}{,}}m}
{\iintvargsaux#1} %
\NewDocumentCommand{\iintvargsaux}{mm} {#1\mkern1.5mu,\mkern1.5mu#2}
\newtheorem*{rep@theorem}{\rep@title}
\newcommand{\newreptheorem}[2]{%
\newenvironment{rep#1}[1]{%
 \def\rep@title{#2 \ref{##1}}%
 \begin{rep@theorem}}%
 {\end{rep@theorem}}}
\definecolor{RedOrange}{cmyk}{ 0, 0.77, 0.87, 0}
\definecolor{RoyalPurple}{cmyk}{ 0.84, 0.53, 0, 0}
\definecolor{YellowGreen}{cmyk}{ 0.44, 0, 0.74, 0}
\definecolor{Fuchsia}{cmyk}{ 0.47, 0.91, 0, 0.08}
\definecolor{Blue}{cmyk}{ 0.84, 0.53, 0, 0}
\definecolor{BlueViolet}{cmyk}{ 0.84, 0.53, 0, 0}
\definecolor{Black}{cmyk}{ 0.75, 0.68, 0.67, 0.9}
\newcommand{\R}{\mathbb{R}}
\renewcommand{\O}{\mathbb{O}}
\newcommand{\N}{\mathbb{N}}
\newcommand{\E}{\mathbb{E}}
\newcommand{\Z}{\mathbb{Z}}
\newcommand{\be}{\mathbf{e}}
\renewcommand{\P}{\mathbb{P}}
\newcommand{\pr}{\mathbb{P}}
\newcommand{\kA}{\mathcal{A}}
\newcommand{\kB}{\mathcal{B}}
\newcommand{\kC}{\mathcal{C}}
\newcommand{\sC}{\mathscr{C}}
\newcommand{\sO}{\mathbb{G}}
\newcommand{\kR}{\mathcal{R}}
\newcommand{\kO}{\mathcal{O}}
\newcommand{\kP}{\mathcal{P}}
\newcommand{\kE}{\mathcal{E}}
\newcommand{\cE}{\mathcal{E}}
\newcommand{\kV}{\mathcal{V}}
\newcommand{\kW}{\mathcal{W}}
\newcommand{\kU}{\mathcal{U}}
\newcommand{\rmd}{{\mathrm{d}}}
\newcommand{\bef}{{\mathbf{e}}}
\newcommand{\cl}{\textrm{cl}}
\newcommand{\rmA}{{\mathrm{A}}}
\newcommand{\rmT}{{\mathrm{T}}}
\newcommand{\rmTp}{{\mathrm{\tilde{T}}}}
\newcommand{\rmTMK}{{\mathrm{T}_M^{\Lambda_K}}}
\newcommand{\rmD}{{\mathrm{D}}}
\newcommand{\aAn}{{ \mathrm{A}_N(e)}}
\newcommand{\aA}{{ \mathrm{A}}}
\newcommand{\eb}{{\mathbf{e}}}
\newcommand\1{{\mathbf 1}}
\newcommand\p{{\mathbf p}}
\newcommand{\lin}{\left[\kern-0.15em\left[}
\newcommand{\rin} {\right]\kern-0.15em\right]}
\newcommand{\linf}{[\kern-0.15em [}
\newcommand{\rinf} {]\kern-0.15em ]}
\newcommand{\ilin}{\left]\kern-0.15em\left]}
\newcommand{\irin} {\right[\kern-0.15em\right[}
\def\be#1{\begin{equation*}#1\end{equation*}}
\def\ben#1{\begin{equation}#1\end{equation}}
\def\bea#1{\begin{eqnarray*}#1\end{eqnarray*}}
\def\al#1{\begin{align*}#1\end{align*}}
\def\aln#1{\begin{align}#1\end{align}}
\newcommand{\secno}[1]{\thesection.\arabic{#1}}
\renewcommand{\hat}{\widehat}
\renewcommand{\tilde}{\widetilde}
\newtheorem{lem}{Lemma}[section]
\newtheorem{prop}[lem]{Proposition}
\newtheorem{thm}[lem]{Theorem}
\newtheorem{cor}[lem]{Corollary}
\newtheorem {defin}[lem] {Definition}
\newtheorem {rem}[lem] {Remark}
\newcounter{assu}
\definecolor{lilas}{RGB}{182, 102, 210}
\numberwithin{equation}{section}
\def\be#1{\begin{equation*}#1\end{equation*}}
\def\ben#1{\begin{equation}#1\end{equation}}
\def\bea#1{\begin{eqnarray*}#1\end{eqnarray*}}
\def\ba#1{\begin{align*}#1\end{align*}}
\DeclareMathOperator{\Diam}{Diam}
\title{Lipschitz-continuity of time constant in generalized First-passage percolation}
\author[V.~H.~CAN]{Van Hao CAN}
\address[V.~H.~CAN]{Institute of Mathematics, Vietnam Academy of Science and Technology, 18 Hoang Quoc Viet, Cau Giay, Hanoi, Vietnam.}
\email{cvhao@math.ac.vn}
\author[S.~NAKAJIMA]{Shuta NAKAJIMA}
\address[S.~NAKAJIMA]{Graduate School of Science and Technology, Meiji University, Kanagawa 214-8571, Japan.}
\email{njima@meiji.ac.jp}
\author[V.~Q.~NGUYEN]{Van Quyet NGUYEN}
\address[V.~Q.~NGUYEN]{Institute of Mathematics, Vietnam Academy of Science and Technology, 18 Hoang Quoc Viet, Cau Giay, Hanoi, Vietnam.}
\email{nvquyet@math.ac.vn}
\begin{document}
\maketitle
\begin{abstract}
In this article, we consider a generalized First-passage percolation model, where each edge in $\Z^d$ is independently assigned an infinite weight with probability $1-p$, and a random  finite weight otherwise. The existence and positivity of the time constant  have been established in  \cite{cerf2016weak}.  Recently, using sophisticated multi-scale renormalizations, Cerf and Dembin \cite{cerf2022time} proved that the time constant of chemical distance in super-critical percolation is Lipschitz continuous. In this work, we propose a different approach leveraging lattice animal theory and a simple one-step renormalization with the aid of Russo's formula, to show the Lipschitz continuity of the time constant in generalized First-passage percolation. 
\end{abstract}
\section{Introduction}
\subsection{Model and main results} First-passage percolation (FPP), which was introduced by Hammersley and Welsh in the 1960s, serves as a prototype for models of random growth or infection models. Let \(d \geq 2\) and \( (\Z^d, \mathcal{E}(\Z^d)) \) represent the \(d\)-dimensional integer lattice, where the edge set $\kE(\Z^d)$ consists of pairs of nearest neighbors in $\Z^d$. To each edge $e \in \kE(\Z^d)$, we assign a random variable $\omega_e$ with values in $[0,\infty)$, assuming that the family $(\omega_e)_{e \in \kE(\Z^d)}$ is independent and identically distributed. The random variable $\omega_e$ can be interpreted as the time needed for the infection 
to cross the edge $e$. We define a random pseudo-metric $\rmT$: for any
pair of vertices $x, y \in \Z^d$, $\rmT(x,y)$ is the shortest time to go from $x$ to $y$. The main object of FPP is to know how the infection grows in the lattice, or equivalently how is the asymptotic behavior of the passage time $\rmT(0,x)$ as $||x||_{\infty}$ tends to infinity. There has been a great and consistent interest of mathematicians for more than sixty years to answer this question, see, for instance, \cite{auffinger201750} and references therein. While most studies focus on the case of finite edge weight, i.e., $\omega_e$ takes a value in $[0,\infty)$, recently there have been several results on the behavior of generalized models allowing the infinite value, see e.g., \cite{garet2004asymptotic,cerf2016weak}. The emergence of infinite weight can explain the situation that some edges in the lattice are not available for the spread of infection. 

In this paper, we consider a generalized FPP that is mixed from the Bernoulli percolation and classical FPP. More precisely, given $F$ a distribution supported on $[0,\infty)$, and $p\in [0,1]$, we define a new distribution $F_p$ by
$$F_p:=pF+(1-p)\delta_{\infty}.$$
 Let $\tau:=(\tau_e)_{e \in \cE(\Z^d)}$ be a family of edge-weights with the same distribution $F_p$, interpreted as the time to pass each edge in $\Z^d$.
The usual first passage time $\rmT(x,y)$ on $\Z^d$ for $x,y\in \Z^d$ is defined by 
$${\rm T}(x,y):=\inf_{\gamma:x\to y} {\rm T}(\gamma):=\inf_{\gamma:x\to y}\sum_{e\in\gamma}\tau_e,$$
where the infimum is taken over all paths from $x$ to $y$ in $\Z^d$. We impose the following constraint on $p$ and $F$:
\ben{
p > p_c(d) > F(0),
}
where $p_c(d)$ is the critical parameter of Bernoulli percolation on $\Z^d$. The condition $p>p_c(d)$ guarantees the unique infinite cluster composed of finite weight edges, while the assumption $F(0)<p_c(d)$ rules out the possibility of having an infinite cluster with zero weight. Since the passage time $\rmT(x,y)$ may take the infinite value (when $x$ and $y$ are not connected by a path of finite weight edges), we consider a modification as follows. Let $\kC_{p}$ denote the unique infinite cluster of edges with finite weights. Given points $x,y\in \R^d$, we define the regularized passage time as
\be{
\rmTp (x,y) := \rmT([x]_p,[y]_p),
}
where $[x]_p$ denotes the ${\rm d_1}$-closest point to $x$ in $\kC_p$ with a deterministic rule breaking ties. Traditionally, the main object of interest in generalized FPP is the asymptotic behavior of $\rmTp$. Particularly, the weak law of large numbers was obtained in \cite{grimmett1990supercritical,cerf2016weak}: there exists a constant $\mu_p \in [0,\infty)$ such that
\begin{align}\label{limit}
 \lim_{n \to \infty} \frac{\rmTp(0, n\mathbf{e}_1)}{n} = \mu_p \qquad \text{ in probability},
\end{align}
where $\mathbf{e}_1$ is the first unit vector in $\R^d$.   Moreover, Garet and Marchand \cite[Remark 1]{garet2004asymptotic} proved that if $\E[\tau_e^{2+\delta} \1_{\tau_e < \infty}] < \infty$ with some $\delta >0$, then the convergence in \eqref{limit} holds true almost surely and in $L_1$.  Our first result is the strong law of large numbers for the regularized first passage time assuming solely the finiteness of first moment of $\tau_e \1_{\tau_e < \infty}$. We prove it   in Appendix \ref{app:slln}. 
\footnote{Although the proof of Theorem \ref{Theorem: time constant} is based on classical Kingman's sub-additive ergodic theorem and is quite simple, we could not find any reference for it.}
\begin{thm}{\bf (SLLN)} 
\label{Theorem: time constant}
If $p > p_c(d)$ and $\E[\tau_e \1_{\tau_e < \infty}] < \infty $, then 
\begin{align*}
 \lim_{n \to \infty} \frac{\rmTp(0, n\mathbf{e}_1)}{n} = \mu_p \qquad \text{ a.s. and in } L_1.
\end{align*}
\end{thm}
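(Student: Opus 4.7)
The plan is to apply Kingman's subadditive ergodic theorem to the family
\be{X_{m,n} := \rmTp(m\mathbf{e}_1, n\mathbf{e}_1), \qquad 0 \le m < n.}
The four hypotheses I need to check are subadditivity, stationarity under the unit horizontal shift, ergodicity of that shift, and the integrability bound $\E[X_{0,n}] \le C n$ for some constant $C$. Kingman's theorem will then yield $X_{0,n}/n \to \gamma$ almost surely and in $L_1$ for some $\gamma \in [0,\infty)$, and the convergence in probability \eqref{limit} forces $\gamma = \mu_p$.

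\emph{Subadditivity} is immediate: for $0 \le m < n < k$, concatenating a path in $\kC_p$ from $[m\mathbf{e}_1]_p$ to $[n\mathbf{e}_1]_p$ with one from $[n\mathbf{e}_1]_p$ to $[k\mathbf{e}_1]_p$ yields a path from $[m\mathbf{e}_1]_p$ to $[k\mathbf{e}_1]_p$, whence $X_{m,k} \le X_{m,n} + X_{n,k}$. For \emph{stationarity and ergodicity}, let $\theta_1$ denote the shift on the weight configurations given by $(\theta_1 \omega)_e = \omega_{e+\mathbf{e}_1}$; the i.i.d.\ law is $\theta_1$-invariant and ergodic, the infinite cluster is $\theta_1$-covariant so that $[x]_p \circ \theta_1 = [x+\mathbf{e}_1]_p - \mathbf{e}_1$, and consequently $X_{m+1,n+1} = X_{m,n} \circ \theta_1$.

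The hard step is \emph{integrability}, since I may only leverage $\E[\tau \1_{\tau<\infty}]<\infty$. The key idea is to upper bound the passage time by the weight sum along a path that is measurable with respect to the percolation geometry alone, so that Fubini decouples length from weights. Take $\pi^\ast$ to be the chemical geodesic in $\kC_p$ from $[0]_p$ to $[n\mathbf{e}_1]_p$ (with any deterministic tie-breaking), so that $\pi^\ast$ is a function of $(\1_{\tau_e<\infty})_e$ alone. Classical supercritical Bernoulli percolation estimates (Pisztora; Antal--Pisztora; Garet--Marchand), together with the exponential tails of $\|[0]_p\|_\infty$, furnish
\be{\E[|\pi^\ast|] \;=\; \E\bigl[D_{\kC_p}([0]_p, [n\mathbf{e}_1]_p)\bigr] \;\le\; C_1\, n}
for some $C_1 = C_1(p,d)<\infty$. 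Conditionally on $(\1_{\tau_e<\infty})_e$, the finite weights $(\tau_e : \tau_e<\infty)$ are i.i.d.\ with law $F$; hence by the tower property,
\be{\E[X_{0,n}] \;\le\; \E\Bigl[\sum_{e \in \pi^\ast}\tau_e\Bigr] \;=\; \E[|\pi^\ast|]\cdot \frac{\E[\tau \1_{\tau<\infty}]}{p} \;\le\; C\,n,}
which is precisely where the finite first moment hypothesis is used.

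With these four ingredients, Kingman's theorem delivers the almost sure and $L_1$ convergence of $X_{0,n}/n$ to $\gamma := \inf_n \E[X_{0,n}]/n \in [0,\infty)$, and comparison with \eqref{limit} identifies $\gamma = \mu_p$. The principal obstacle is the supercritical chemical distance estimate, which must be imported from the percolation literature (or re-proved via Pisztora's renormalization); the other steps are routine subadditive ergodic theorem bookkeeping.
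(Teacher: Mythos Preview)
Your proposal is correct and follows essentially the same approach as the paper: apply Kingman's subadditive ergodic theorem, with the only nontrivial hypothesis being integrability, which is obtained by choosing a path depending only on the Bernoulli percolation $(\1_{\tau_e<\infty})_e$ and then decoupling its length from the conditional i.i.d.\ weights via the tower property. The paper carries this out for the single step $\E[\rmTp(0,\mathbf{e}_1)]<\infty$ by summing all finite weights in the random box $\Lambda_X$ where $X=\inf\{m:\rmD_p^{\Lambda_m}([0]_p,[\mathbf{e}_1]_p)<\infty\}$, rather than along the chemical geodesic to $[n\mathbf{e}_1]_p$ as you do, but the ingredients (Antal--Pisztora chemical distance, Pisztora hole estimates, and the percolation/weight decoupling) are identical.
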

The continuity and regularity of the time constant of First-passage percolation and chemical distance in super-critical percolation have been subjects of investigation since the 1980s. The continuity has been explored in works \cite{cox1980time,cox1981continuity,garet2017continuity}, while the regularity has been addressed in \cite{dembin2021regularity,cerf2022time,kubota2022comparison}.\footnote{Note that in \cite{kubota2022comparison}, a distribution defined as $F_p=p\delta_0+(1-p)\delta_1$ was considered, and explicit bounds for the Lipschitz constants were obtained.} In particular, Cerf and Dembin \cite{cerf2022time} have established the Lipschitz continuity of the time constant for the chemical distance, i.e., \( F = \delta_1 \).  Going further, the authors also claim a quantitative estimate of difference of time constants for two distributions (that includes Theorem~\ref{Theorem: main} below), though they do not give detailed proof. 

 In this context, we present our main result as follows:
\begin{thm} {\bf (Lipschitz continuity)}\label{Theorem: main}
For all \( p_0 > p_c(d) \), there exists a constant  \( C = C(d,p_0,F) > 0 \) such that for all \( p, q \) in the interval \( [p_0,1] \),
$$|\mu_p - \mu_q| \leq C |p - q|.$$
\end{thm}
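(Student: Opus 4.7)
The plan is to prove the one-sided inequality $\mu_q - \mu_p \le C(p-q)$ for $p_0 \le q \le p$; the reverse inequality is immediate from the monotonicity of $p \mapsto \mu_p$. Use the Harris coupling: to each edge $e$ attach independent $U_e \sim \mathrm{Unif}[0,1]$ and $X_e \sim F$, and set $\tau_e(p) := X_e \1_{U_e \le p} + \infty \cdot \1_{U_e > p}$; an edge with $U_e \in (q,p]$ is called \emph{problematic}. Since no moment assumption is imposed on $F$, I would first pass to the truncated weights $X_e^M := X_e \wedge M$ and denote by $\mu_p^M$ the corresponding time constant. A monotone approximation (as in \cite{cerf2016weak}) gives $\mu_p^M \uparrow \mu_p$ as $M \to \infty$, so it suffices to establish $\mu_q^M - \mu_p^M \le C(p-q)$ with $C = C(p_0)$ \emph{independent} of $M$.

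\textbf{One-step renormalization.} Fix a single scale $K = K(p_0)$ and tile $\Z^d$ by boxes $\Lambda_K(z) := Kz + [0,K)^d$. Declare $\Lambda_K(z)$ \emph{good} at parameter $q$ if, in the threefold enlargement $\Lambda_{3K}(z)$, (i) the $q$-open subgraph has a unique crossing cluster, (ii) each $q$-open vertex of $\Lambda_K(z)$ is connected to it by a chemical path of length at most $C_0 K$, and, crucially, (iii) the minimum $M$-truncated passage time between any two designated boundary hubs is at most $C_1 K$, with the constant $C_1 = C_1(p_0, F)$ \emph{independent of} $M$. Property (iii) is the role of lattice animal theory: the infimum over the abundance of $q$-open paths in $\Lambda_{3K}(z)$ is controlled by a truncation-free functional of $F$, in the spirit of Cox--Gandolfi--Griffin--Kesten. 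Standard supercritical percolation bounds then yield $\pr(\Lambda_K(z)\text{ bad}) \le \varepsilon(K) \to 0$ as $K \to \infty$, uniformly in $q \in [p_0, 1]$.

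\textbf{Rerouting and Russo's formula.} Take a near-optimal path $\gamma^\ast$ realizing $\rmT_p^M([0]_p, [n\mathbf{e}_1]_p)$; a lattice-animal estimate gives $|\gamma^\ast| \le C_2 n$. I would reroute $\gamma^\ast$ into a valid path in the $q$-cluster: each problematic edge lying in a good box is detoured through the $q$-cluster at truncated cost $\le C_1 K$, while the rare problematic edges trapped in bad components are absorbed into a lattice animal whose total weight contributes $o(n)$, by the exponential tail of bad-cluster sizes. Conditionally on $\gamma^\ast$, each of its edges is independently problematic with probability $(p-q)/p \le (p-q)/p_0$, so the expected number of problematic edges is at most $C_2 (p-q) n / p_0$. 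Combining,
\[
\E\bigl[\rmT_q^M([0]_q, [n\mathbf{e}_1]_q) - \rmT_p^M([0]_p, [n\mathbf{e}_1]_p)\bigr] \le C(p-q) n + o(n).
\]
Russo's formula enters in a complementary capacity: since $\{\Lambda_K(z)\text{ is good}\}$ is an increasing event depending on finitely many edges, Russo delivers $\pr_p(\text{good}) - \pr_q(\text{good}) \le C_K (p-q)$, which cleanly handles boxes whose good/bad status switches between the coupled environments. Dividing by $n$, sending $n \to \infty$ and then $M \to \infty$ yields the theorem.

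\textbf{Main obstacle.} The technical heart of the argument is property (iii), namely the $M$-independent bound $C_1 K$ on the minimum truncated passage time across a good box. Even though a single truncated weight can be as large as $M$, the profusion of $q$-open paths available when $q \ge p_0 > p_c$ ties the minimum to $F$ on a bounded range rather than to the truncation level; quantifying this is exactly what lattice animal theory delivers, and it is what allows the truncation parameter to disappear from the final Lipschitz constant.
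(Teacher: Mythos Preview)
Your proposal diverges substantially from the paper's argument, and the divergence is not cosmetic: you have essentially swapped the roles of the two main tools. In the paper, Russo's formula is the \emph{central engine}: it is applied directly to $\E[\rmT_M^{\Lambda_K}(0,n\mathbf{e}_1)]$ (Lemma~\ref{lemrusso}) to produce
\[
\left|\frac{\mathrm{d}}{\mathrm{d}p}\E[\rmT_M^{\Lambda_K}]\right| \le \E\Big[\sum_{e\in\gamma}\Delta_e\rmT_M^{\Lambda_K}\Big],
\]
and each influence $\Delta_e\rmT_M^{\Lambda_K}$ is then bounded by $C_*\lambda R_e$ via a bypass that uses only edges of weight $\le\lambda$ (Proposition~\ref{prop:bypass}). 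Lattice animal theory (Lemma~\ref{Lem: lattice animal}, Corollary~\ref{corre}) is used to bound $\E[\sum_{e\in\gamma}R_e]$, i.e.\ to control a \emph{sum of local random radii along a random path}. By contrast, you deploy Russo's formula only to compare $\pr_p(\text{good})$ with $\pr_q(\text{good})$, which is incoherent as stated: your good-box event is defined at the fixed parameter $q$ and does not vary with $p$, so there is nothing for Russo to differentiate. Your ``conditional on $\gamma^\ast$, each edge is problematic with probability $(p-q)/p$'' step is correct and is in fact the integrated analogue of the paper's Russo computation, but you do not seem to recognise that this \emph{is} where Russo's content enters.

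The more serious gap is your property~(iii) and its justification. You need the $M$-truncated passage time across a good box to be at most $C_1 K$ with $C_1$ \emph{independent of $M$}, and you attribute this to ``lattice animal theory \`a la Cox--Gandolfi--Griffin--Kesten''. That theory bounds the \emph{maximum} of $\sum_{e\in\gamma}X_e$ over lattice animals $\gamma$; it says nothing about the \emph{infimum} over paths, which is a first-passage quantity. Without a moment hypothesis on $F$, a single $q$-open path of length $\ell$ can have arbitrarily large passage time, so you genuinely need the abundance of paths, and the relevant input is the Cerf--Th\'eret time constant together with quantitative crossing estimates, not lattice animals. The paper sidesteps this entirely by the $\lambda$-truncation: it declares an edge $q$-open when $\tau_e\le\lambda$ for a fixed $\lambda=\lambda(p_0,F)$ (see~\eqref{dolam}), so every bypass edge has weight bounded by $\lambda$ and the detour cost is automatically $\le \lambda\cdot(\text{chemical length})$, with no reference to $M$ or to moments of $F$. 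Your fixed-$K$ scheme has no analogue of this, and that is why your ``main obstacle'' paragraph correctly identifies the difficulty but reaches for the wrong tool to resolve it.
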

Notably, the time constant can be expressed as the limit of a truncated passage time defined below, which implies that the moment condition on weight is not necessary for this theorem.
\subsection{Outline of the proof}  The proofs in \cite{cerf2022time} utilizes a sophisticated multi-scale renormalization technique. However, in our paper, we propose an alternative approach that employs lattice animal theory combined with a straightforward one-step renormalization process. Let us explain the outline of the proof here.

Let $M:=M_n:=(\log n)^3$ and $K :=K_n:= n^2$.  We denote by $\rmTMK(x,y)$ the first passage time between $x$ and $y$ associated with the truncated weights $(\tau^M_e)_{e \in \kE(\Z^d)}$ using only paths inside $\Lambda_K$, where $\tau^M_e: =\tau_e \wedge M$. Then the proof of Theorem \ref{Theorem: main} is decomposed into two steps: \\

\noindent \textbf{Step 1 (Time constant as the limit of truncated passage time)}: We aim to show
\begin{align}\label{Time constant cut off at M}
 \lim_{n \to \infty} \dfrac{\E \left[\rmT_M^{\Lambda_K}(0,n\mathbf{e}_1)\right]}{n} = \mu_{p}.
\end{align}
The proof goes as follows. Let $\lambda$ be a large positive constant and $q:=\pr(\tau_e \leq \lambda)$, see Appendix \ref{app:lamd} for the choice of $\lambda$. We consider the percolation of $q$-open edges consisting of $\{e \in \kE(\Z^d):~\tau_e \leq \lambda\}$ and use similar notations, such as $\kC_q$ and $[x]_q$, for this percolation. Note that $\kC_q \subset \kC_p$, and the vertices in $\kC_q$ can be connected to each other along paths whose weights are at most $\lambda$. According to \cite[Lemma 2.11]{garet2017continuity}, we have
\be{
 \lim_{n \to \infty} \frac{\rmT([0]_q,[n\mathbf{e}_1]_q)}{n} =\mu_p \quad \text{ a.s. and in } L_1.
}
In this step, we further aim to show 
\be{
 \E \left[\left|\rmT([0]_q, [n\mathbf{e}_1]_q)-\rmT_M^{\Lambda_K}(0,n\mathbf{e}_1)\right| \right] = \kO( \lambda M).
}
To prove this estimate, we introduce the notation of \textbf{effective radius} $(R_e)_{e\in \kE(\Z^d)}$ in  Section \ref{sec:resampling}. Roughly speaking, given an edge $e$ belonging to a geodesic of  the truncated passage times, $R_e$ measures the length of $q$-open path for bypassing $e$. Under the event that $\{R_e \leq (\log n)^{5/2}, \, \forall e \in [-n^2,n^2]^d\}$ which occurs with overwhelming probability, we  show that $\big|\rmT([0]_q, [n\mathbf{e}_1]_q)-\rmT_M^{\Lambda_K}(0,n\mathbf{e}_1)\big| =\kO(\lambda M)$. In particular, we have \eqref{Time constant cut off at M}. We refer to Section \ref{sec:comp} for the details.\\

\noindent \textbf{Step 2 (Linear bound via Russo's formula)}: Let $\rmT_{M,\pm,e}^{\Lambda_K}(0,n\mathbf{e}_1)$ be the first passage time when the weight of the edge \( e \) is set to \( M \) for \( + \) and \( 0 \) for \( - \), respectively. We take $\gamma$ to be a geodesic of ${\rmT_M^{\Lambda_K}(0,n\mathbf{e}_1)}$, and we define 
$\Delta_e \rmT_M^{\Lambda_K}(0,n\mathbf{e}_1) :=\rmT_{M,+,e}^{\Lambda_K}(0,n\mathbf{e}_1)-\rmT_{M,-,e}^{\Lambda_K}(0,n\mathbf{e}_1)$. We aim to show 
\ben{ \label{russo}
\left| \frac{{\rm d} \E \left[{\rmT_M^{\Lambda_K}(0,n\mathbf{e}_1)} \right]}{{\rm d}p} \right|\leq \E \left[ \sum_{e \in \gamma} \Delta_e {\rmT_M^{\Lambda_K}(0,n\mathbf{e}_1)} \right] \leq \kO(1) \E \left[ \sum_{e \in \gamma} R_e \right] \leq \kO(n).
} The first inequality follows from a standard application of Russo's formula. The second inequality simply follows from the construction of effective radius appearing above. The proof of the last inequality in \eqref{russo} uses properties of effective radius, i.e., a local dependence and a good probability decay, and lattice animal theory.\\ 

The effective radius along with the utilization of lattice animal theory is proved to be robust in estimating the effect of flipping edge in percolation. In fact, we can use these ingredients to establish the sub-diffusive concentration of chemical distance in Bernoulli supercritical percolation as in \cite{CNg}. Nevertheless, the effective radius in \cite{CNg} is defined in a slightly more complex manner to enable the construction of a bypass for an edge along a mixed path composed of several open paths and geodesics. Furthermore, it also satisfies a more robust large deviation estimate compared to Proposition \ref{Lem: effective radius}.

\subsection{Notation} We summarize some notation frequently used throughout the paper.

$\bullet$ \textit{Integer interval}. We define $[a]:=[1,a]\cap \Z$ for all $a\geq 1$.

$\bullet$ \textit{Box and its boundary}. For every $x \in \Z^d$ and $t >0$, we define $\Lambda_{t}(x): = x+  [-t,t]^d \cap \Z^d $ the box with center $x$ and radius $t$. For simplicity, we write $\Lambda_t := \Lambda_t(0)$. We define the boundary of $\Lambda_t(x)$ as $\partial \Lambda_t(x):=\Lambda_t(x) \setminus \Lambda_{t-1}(x)$.

$\bullet$ \textit{Edge set}. Given a set $A \subset \Z^d$, we denote by $\kE(A)$ the set of edges both of whose endpoints belong to $A$.

$\bullet$ \textit{Set distance}. For $X,Y \subset \Z^d$, we consider several kinds of distance between $X$ and $Y$ as
 \begin{align*}
 {\rm d}_\star (X,Y) := \min \{\|x-y\|_\star:x \in X,y \in Y \}, \qquad \star \in \{1, 2, \infty\}.
 \end{align*}
 
$\bullet$ \textit{Path} and \textit{open path}. We say that a sequence $\gamma = (v_0,\ldots, v_n)$ is a {\bf path} if $ |v_i - v_{i-1}|_1 =1$ and $v_i\neq v_j$ for all $i\neq j \in [n]$. 
Given $A\subset \Z^d$, let $\kP(A)$ denote the set of all paths inside $A$. Given a Bernoulli percolation on $\Z^d$ with parameter $p$, we say that a path is {\bf $p$-open} if all of its edges are open. An {\bf open cluster} is a maximal connected component in the percolation. An open cluster $\kC$ is called a $q$-crossing in $\Lambda$ if in each direction there is an open path in $\kC$ connecting the two opposite faces of $\Lambda$. In that case, we write $q$-crossing cluster $\kC\subset \Lambda$.  

 $\bullet$ \textit{Geodesic} and \textit{truncated passage time} : Let $\rmT$ be the first passage time associated with weights $(\omega_e)_{e \in \kE(\Z^d)}$. Given $x,y \in \Z^d$, a path $\gamma$ between $x$ and $y$ is termed a \textbf{geodesic} of $\rmT$ if its passage time matches $ \rmT(x,y)$, i.e., $\rmT(\gamma):=\sum_{e \in \gamma} \omega_e =\rmT(x,y)$. Given $H>0$ and $A\subset \Z^d$, we define the \textbf{truncated passage time}, denoted by $\rmT^A_H$, as the first passage time associated with the truncated weights $(\omega_e \wedge H)_{e \in \kE(Z^d)}$ using only paths inside $A$. 
  When $A=\Z^d$, we write $\rmT_H:=\rmT_H^{\Z^d}$.
\subsection{Organization}
The paper is organized as follows. In Section \ref{sec:ing}, we introduce the main ingredients of proof including Russo's formula, effective radius, and lattice animal theory. In Section \ref{Section: Lipschitz continuity}, we prove Step 1 and Step 2 using the elements prepared in Section 2. In the Appendix, we prove the strong law of large numbers of the passage time (Theorem \ref{Theorem: time constant}), Russo's formula and properties of effective radius. 
\section{Main ingredients of proof} \label{sec:ing}
In this section, we introduce three main elements in proving the Lipschitz continuity. The first result is Russo's type formula (Lemma~\ref{lemrusso}). The second result considers the effects of resampling an edge (Propositions~\ref{Lem: effective radius} and \ref{prop:bypass}), and the third result provides an upper bound on the total cost of resampling along a random path using the lattice animal theory (Corollary~\ref{corre}).  Although they have been already investigated in previous research, we provide the proofs of these results in Appendix for the completeness of the paper. 
\subsection{Russo's type formula}
Let $L \in \R_{+} \cup \{\infty\}$. Let $\nu$ be a random variable with the distribution $G$ supported in $[0,L]$. For $p \in (p_c(d) ,1)$, we define the distribution $G_p$ on $[0,L]$ by
\begin{align*}
 G_p := p G + (1-p) \delta_{L},
\end{align*}
where $\delta_L$ stands for the Dirac delta distribution at $L$.
\begin{lem}
\label{lemrusso} Let $E$ be a finite set, $\xi=(\xi_e)_{e \in E}$ i.i.d. random variables with the common distribution $G_p$, and $X : [0, L]^E \to \R$  be a function such that $X(\xi)$ is integrable. Suppose that  $\xi^{+,e}$ and $\xi^{e}$ are obtained from $\xi$ by replacing $\xi_e$ with $L$ and with $\nu$ respectively, where $\nu$ is an independent random variable with distribution $G$. Then, we have
\begin{align*}
 \dfrac{{\rm d} \E[X(\xi)]}{{\rm d} p} = \sum_{e \in E} (\E[X(\xi^e)]- \E[X(\xi^{+,e})]).
\end{align*}
\end{lem}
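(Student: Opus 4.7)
The plan is to exploit the fact that, for a fixed realization of all edges, $\E[X(\xi)]$ is a polynomial in $p$ of degree at most $|E|$, so the derivative can be computed by straightforward term-by-term differentiation of a finite sum over subsets of $E$ indexing which coordinates are ``$G$-sampled'' versus ``$L$-saturated''. No limiting argument is needed provided the expectations are finite.

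First, I would write out the polynomial representation. For each $S \subseteq E$, set
$$f(S) := \E\bigl[X(\xi) \bigm| \xi_e \sim G \text{ for } e \in S,~\xi_e = L \text{ for } e \notin S\bigr],$$
which does not depend on $p$. By total probability,
$$\E[X(\xi)] = \sum_{S \subseteq E} p^{|S|}(1-p)^{|E|-|S|} f(S).$$

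Next, I would differentiate and reorganize the sum by edges. The product rule gives
$$\frac{{\rm d}}{{\rm d}p}\bigl(p^{|S|}(1-p)^{|E|-|S|}\bigr) = \sum_{e \in S} p^{|S|-1}(1-p)^{|E|-|S|} \;-\; \sum_{e \notin S} p^{|S|}(1-p)^{|E|-|S|-1},$$
and swapping the order of summation between $e$ and $S$ yields
$$\frac{{\rm d}}{{\rm d}p}\E[X(\xi)] = \sum_{e \in E} \biggl( \sum_{S \ni e} p^{|S|-1}(1-p)^{|E|-|S|} f(S) \;-\; \sum_{S \not\ni e} p^{|S|}(1-p)^{|E|-|S|-1} f(S) \biggr).$$

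Finally, I would identify each inner sum as one of the two expectations in the statement. Reparametrizing the first inner sum via $S = S' \cup \{e\}$ with $S' \subseteq E \setminus \{e\}$, the weight $p^{|S'|}(1-p)^{|E|-1-|S'|}$ is exactly the probability that the $G$-sampled edges among $E\setminus\{e\}$ form $S'$; conditionally on this, $\xi_e$ is drawn independently from $G$ (i.e.\ plays the role of $\nu$), so this sum equals $\E[X(\xi^{e})]$. The analogous re-parametrization of the second inner sum (with $e$ forced to take value $L$) identifies it as $\E[X(\xi^{+,e})]$, giving the claimed formula.

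The only potential subtlety is the case $L=\infty$, where one must verify integrability of $X(\xi)$, $X(\xi^e)$ and $X(\xi^{+,e})$ before applying linearity of expectation and differentiating under the finite sum. In the applications of Section \ref{Section: Lipschitz continuity} the role of $X$ will be played by a truncated passage time on a finite box and is therefore bounded, so this is not a real obstacle.
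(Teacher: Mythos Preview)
Your proof is correct and arrives at the same identity, but the route differs from the paper's. The paper introduces a coupling: it enumerates $E=\{e_1,\ldots,e_n\}$, allows a separate parameter $p_i$ per edge, realizes $\xi^{\mathbf p}$ via independent uniforms $U_i$ and $G$-samples $s_{e_i}$ through $\omega_{e_i}^{\mathbf p}=\1(U_i\le p_i)s_{e_i}+\1(U_i>p_i)L$, and then computes each partial derivative $\partial f/\partial p_i$ by isolating the event $\{U_i\in(p_i,p_i+\varepsilon]\}$; the result follows by the chain rule along the diagonal $p_1=\cdots=p_n=p$. Your argument instead expands $\E[X(\xi)]$ as an explicit polynomial $\sum_S p^{|S|}(1-p)^{|E|-|S|}f(S)$ and differentiates term by term, then regroups by edge. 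Your approach is more elementary---no coupling, no limit in $\varepsilon$, and it makes the polynomial dependence on $p$ transparent---whereas the paper's coupling is the standard Margulis--Russo device, which extends more naturally to distinct parameters per coordinate and to continuous perturbations. Either argument requires the same implicit integrability of $X(\xi)$, which, as you note, is automatic in the application since $X$ is a bounded truncated passage time on a finite box.
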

\subsection{The effect of resampling edges}\label{sec:resampling}
As we will see in the next section, using Russo's type formula (Lemma~\ref{lemrusso}), the problem of Lipschitz continuity of time constant can be reduced to controlling the effect of resampling the edges along the geodesics. Given an edge $e$, we introduce the {\bf effective radius} $R_e$, which measures the change of chemical distance when flipping the state of $e$ from open to closed. 

Given a coupling of Bernoulli percolation models for parameters $p$, a path $\gamma$ is called $p$-open if all of its edges are open in the corresponding percolation with parameter $p$. We define the set of $p$-open paths in $A\subset \Z^d$ by 
 \be{
 \mathbb{O}_p{(A)}:=\{\gamma\in \kP(A):\textrm{ $\gamma$ is $p$-open}\}.
 }
 For $A,B,U \subset \Z^d$, we define the {\bf chemical distance}
 \begin{align*}
 \rmD^U_{p}(A,B):= \inf \{ |\gamma|: x \in A,\, y\in B,\,\gamma \text{ is a $p$-open path from } x \text{ to } y\text{ inside }U\}.
 \end{align*}
When $U=\Z^d$, we simply write $\rmD_p$ for $\rmD_p^{\Z^d}$.{ Given $p\in [0,1]$ and $\lambda \in \R$, we define
\ben{
q:= q(p, \lambda):= \pr(\tau_e \leq \lambda)=p F([0, \lambda]).
}
Let $\delta_0$ be a sufficiently small positive constant as in Lemma \ref{apqn} below. Given $p_0 \in (p_c(d),1]$, we define $q_0:= \frac{p_0+p_c(d)}{2}$ and take   $\lambda=\lambda(p_0,F)$ sufficiently large  such that $F([0,\lambda]) \geq \max\left\{ \frac{q_0}{p_0},1-\delta_0\right \}$, which implies
 \ben{ \label{dolam}
 q_0 \leq q \leq p \leq q + \delta_0 \, \, \forall \,  p\in [p_0,1].
}
We say that an edge $e$ is $q$-open or $p$-open if $\tau_e \leq \lambda$ or $\tau_e < \infty$, respectively. We call $q$-percolation and $p$-percolation the associated percolation models. Let $\kC_q$ and $\kC_p$ be the corresponding infinite clusters. We will see in  Appendix \ref{app:lamd} that the condition \eqref{dolam} assures that a large cluster in $\kC_q$ and a  long path in $\kC_p $ would intersect with high probability.} 
  Given an edge $e\in \kE(\Z^d)$, we fix a rule to write $e=(x_e,y_e)$ so that $\|x_e\|_1<\|y_e\|_1$. For $N \geq 1$, and $e=(x_e,y_e)$, we define $\Lambda_N(e):=\Lambda_N(x_e)$, and an annulus
\begin{align}
\aAn
 := \Lambda_{3N}(e) \setminus \Lambda_N(e).
\end{align} 
We say that $\gamma$ is a crossing path of $ \aAn$ if $\gamma$ is a path 
 inside $ \aAn$ that joins $\partial \Lambda_{N}(e)$ and $\partial \Lambda_{3N}(e)$. Let $\sC(\aAn)$ be the collection of all crossing paths of $\aAn$.  Given $H>0$ and $A\subset \Z^d$, recall that $\rmT^A_H$ is the first passage time associated with the truncated weights $(\tau_e \wedge H)_{e \in \kE(Z^d)}$ using only paths inside $A$. For $u,v \in A$, we define the set of geodesics of $\rmT_H^A(u,v)$ as
 \be{
 \sO_H(u,v;A) :=\{\gamma =(u,\ldots,v) \in \kP(A): \rmT_H(\gamma)=\rmT_H^{A}(u,v) \}.
 }
We also define 
$$\sO_H(A):=\bigcup_{u,v \in A} \sO_H(u,v;A).$$
If $A=\Z^d$, we simply write $\sO_H(x,y)$ for $\sO_H(x,y;\Z^d)$ and write $\sO_H$ for $\sO_H(\Z^d)$.
 \begin{rem} \label{rem:omkx} 
 Given $B \subset A \subset \Z^d$ and $H>0$, if $\gamma \in \sO_H(A)$ and $\pi$ is a sub-path of $\gamma$ such that $\pi \subset B$, then $\pi \in \sO_H(B)$. We note that 
  $\sO_H(A)$ is measurable with respect to the weights of edges inside $A$. 
 \end{rem}
  \begin{figure}[h]
  \centering
  \includegraphics[width=0.8\linewidth]{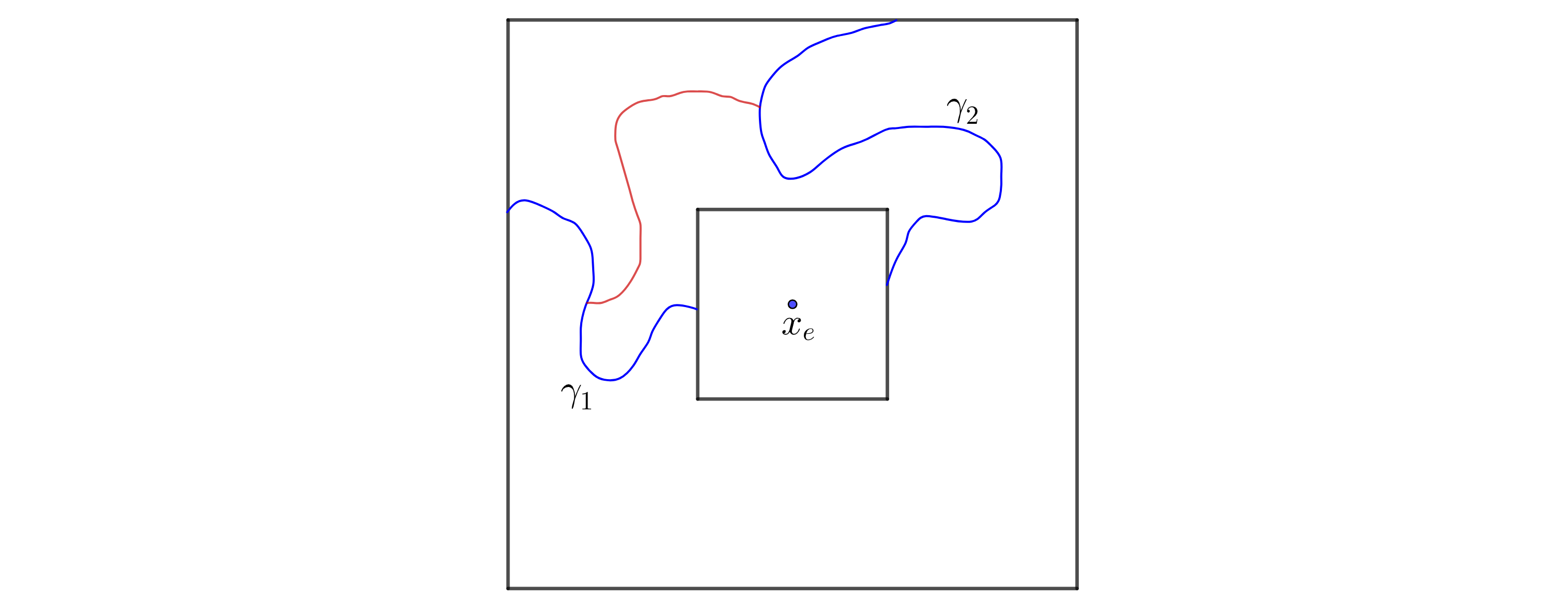}
  \caption{Illustration of the event inside the definition of $R_e$: the blue lines represent the crossing paths $\gamma_1$ and $\gamma_2$ of $\aAn$; the red line represents a geodesic of $\rmD_q^{\aAn}(\gamma_1,\gamma_2)$.}
  \label{fig:eradisu}
\end{figure}
  Let $C_*\geq 3$ be a constant. For each $e \in \kE(\Z^d)$, we define the $q$-\textbf{effective radius} of $e$ (see Figure \ref{fig:eradisu} for illustration) as
\begin{align*}
 R_e := R_e(C_*,H):=\inf \left\{N \geq 3: \forall \gamma_1, \gamma_2 \in \sO_{H}(\Lambda_{C_*N}(e)) \cap \sC(\aAn),\, 
 {\rm D}_{q}^{\aAn}(\gamma_1,\gamma_2) \leq C_*N \right\}.
\end{align*}
\begin{rem} \label{rem:weak}
By the definition of effective radius and Remark \ref{rem:omkx}, for all $e \in \kE(\Z^d)$ and $t \geq 1$ the event $\{R_e=t\}$ depends solely on the states of edges within the box $\Lambda_{C_*t}(e)$. 
\end{rem}
 We fix $\lambda=\lambda(p_0,F), q=q(p,\lambda)$ as in \eqref{dolam} throughout the paper. The following proposition gives a large deviation estimate for effective radii.
 \begin{prop}\label{Lem: effective radius}
 \footnote{A stronger (exponential) bound for Proposition~\ref{Lem: effective radius} is obtained in \cite[Section 2]{CNg}, though the present estimate is sufficient for our current purpose.}
Let $p_0 \in (p_c(d),1]$. There exist $C_* \geq 3$ and $ c \in (0,1)$ depending on $d, p_0$ such that for all $p \in [p_0,1]$ and $H >0$,  
 \begin{align*}
 \pr(R_e \geq t) \leq c^{-1} \exp(-c \sqrt{t}) \qquad \forall e \in \kE(\Z^d), \quad \forall \, t \in [c H^2].
 \end{align*}
\end{prop}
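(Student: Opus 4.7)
The plan is to use a one-step renormalization of the supercritical $q$-percolation, exploiting the uniform gap $q \geq q_0 := (p_0 + p_c(d))/2 > p_c(d)$ supplied by \eqref{dolam}. Fixing a renormalization scale $L = L(p_0)$ (to be taken large), I declare a block $B_z = \Lambda_L(Lz)$, $z \in \Z^d$, to be \emph{$L$-good} if the standard Pisztora-type conditions hold in the doubled box $\Lambda_{2L}(Lz)$: (i) there is a unique $q$-open crossing cluster $\kC_{B_z}$ touching all $2d$ faces; (ii) every other $q$-open cluster inside $\Lambda_{2L}(Lz)$ has diameter at most $L/10$; (iii) the chemical distance within $\kC_{B_z}$ between vertices at Euclidean distance $\geq L/10$ is at most $c_1 L$; (iv) the giant clusters of overlapping good blocks are $q$-open connected. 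Combining Grimmett--Marstrand's slab renormalization with the Antal--Pisztora chemical distance estimate, one can choose $L$ large enough so that $\pr(B_z \text{ bad}) \leq \exp(-c_2 L)$ uniformly in $q \geq q_0$. Since the event depends only on edge weights inside $\Lambda_{2L}(Lz)$, Liggett--Schonmann--Stacey stochastic domination reduces bad blocks to an independent site percolation of arbitrarily small parameter on the coarse lattice.

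Let $\kG_N$ be the event that every block $B_z$ intersecting $\Lambda_{2C_*N}(e)$ is $L$-good. A union bound yields
\[
\pr(\kG_N^c) \leq C_3 (N/L)^d \exp(-c_2 L) \leq c^{-1}\exp(-c N),
\]
which is much stronger than the required $\exp(-c\sqrt{N})$. On $\kG_N$, properties (i)--(iv) merge the giant clusters of adjacent good blocks into a single connected $q$-open set $\kC^* \subset \Lambda_{C_*N}(e)$ satisfying $\rmD_q^{\aAn}(x, y) \leq c_4\|x - y\|_\infty$ for all $x, y \in \kC^* \cap \aAn$ with $\|x-y\|_\infty \geq L$.

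The main obstacle is to show that on $\kG_N$, every pair of crossing geodesics $\gamma_1, \gamma_2 \in \sO_H(\Lambda_{C_*N}(e)) \cap \sC(\aAn)$ satisfies $\rmD_q^{\aAn}(\gamma_1, \gamma_2) \leq C_*N$. The natural reduction is to show that each $\gamma_i$ must share a vertex with $\kC^*$: once this is verified, the chemical-distance property of $\kC^*$ immediately yields $\rmD_q^{\aAn}(\gamma_1, \gamma_2) \leq 6 c_4 N \leq C_* N$ for $C_*$ chosen sufficiently large (depending on $c_4$). The intersection property is proved by contradiction via a local surgery: were $\gamma_i$ to avoid $\kC^*$ entirely, then by (ii) every vertex of $\gamma_i$ would lie in a small $q$-open cluster of diameter $\leq L/10$, compelling $\gamma_i$ to traverse $\Omega(|\gamma_i|/L)$ $q$-closed edges of truncated weight $\geq \lambda$; splicing an appropriate sub-segment of $\gamma_i$ through $\kC^*$ (allowed by (iii)--(iv), with the local detour cost $\sim L H$ kept manageable by the constraint $t \in [cH^2]$) then produces a strictly lighter competing path, contradicting the geodesic property. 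Carefully balancing detour cost against the saving from removed $q$-closed edges is the technical heart of the argument. Once the intersection is established, $R_e \leq N$ on $\kG_N$, hence
\[
\pr(R_e \geq t) \leq \pr(\kG_{t-1}^c) \leq c^{-1}\exp(-c\sqrt{t})
\]
for all $t \in [cH^2]$, which completes the proof.
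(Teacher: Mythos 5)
Your overall architecture — a good-box event, a chaining argument for $\rmD_q^{\aAn}$ between cluster vertices, and a geodesic/cluster intersection step — does parallel the paper's scheme (Definition \ref{Def: good annulus}, Lemmas \ref{Lem: being bad annulus} and \ref{lem: crossing cluster}). But two of your steps contain genuine gaps.

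First, the probability estimate for $\kG_N^c$ is wrong. You fix a constant renormalization scale $L = L(p_0)$ and then union-bound over the $(N/L)^d$ blocks that meet $\Lambda_{2C_*N}(e)$, each bad with probability $\lesssim \exp(-c_2 L)$. Since $L$ is a constant independent of $N$, this gives $\pr(\kG_N^c) \lesssim (N/L)^d \exp(-c_2 L)$, which does \emph{not} decay in $N$: it is a polynomial in $N$ times a fixed constant, so for large $N$ the event $\kG_N$ is essentially never realized. The displayed bound $\leq c^{-1}\exp(-cN)$ does not follow. To make a union bound work, the block scale must grow with $N$; the paper does this implicitly by working with a single box $\Lambda_{3N}(e)$ whose sub-boxes have side $N_\rho = \Theta(N)$, so there are only $O(1)$ sub-boxes (or $O(N^d)$ vertices, harmless against an $\exp(-\sqrt{N})$ tail). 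You would need something like $L \sim \sqrt{N}$, at which point the fixed-scale LSS machinery no longer buys you anything and you are essentially back to the paper's single-scale estimate.

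Second, the surgery argument for the geodesic-intersection property does not balance. If $\gamma_i$ avoids $\kC^*$ it indeed must cross $\Omega(N/L)$ $q$-closed edges, each of truncated weight $\geq \lambda \wedge H$, so $\rmT_H(\gamma_i) \gtrsim (N/L)\lambda$. But any competing path that detours through $\kC^*$ must (a) pay $\lambda$ per $q$-open edge for the chemical-distance traversal of the annulus, a cost $\gtrsim \lambda N$, and (b) pay up to $H$ per edge to connect $\gamma_i$ into $\kC^*$, a cost $\gtrsim LH$. For large $L$ we have $\lambda N \gg (N/L)\lambda$, so the ``competing path'' is \emph{more} expensive, not less — there is no contradiction. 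The condition $t \in [cH^2]$ does not rescue this: it controls $\lambda N$ against $H\sqrt{N}$ (the scale at which the paper works), not against $(N/L)\lambda$. The paper avoids this trap by never comparing two competing paths; instead, in Lemma \ref{lem: crossing cluster}, it shows a dichotomy using $p$-closed edges ($\tau_e = \infty$, truncated weight exactly $H$): either $\pi$ has $\geq \sqrt{N}/2$ $p$-closed edges, forcing $\rmT_H(\pi) \geq \sqrt{N}H/2$, which is ruled out by an explicit upper bound on $\rmT_H^{\Lambda_{tN}}(x,y)$ obtained from a path $\eta_x \cup \eta_{x,y} \cup \eta_y$ through $\kC_q$ (here $N \leq cH^2$ makes $\lambda\rmD_q \lesssim H\sqrt{N}$); or $\pi$ has a long $p$-open sub-path, which avoids the $q$-crossing cluster only with probability $\exp(-c\sqrt{N})$ by the Russo/coupling argument of Lemma \ref{apqn}. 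Replacing $p$-closed by $q$-closed edges, as you do, discards the factor $H$ that powers the balance and breaks the argument.
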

 We fix $C_*$ as in Proposition \ref{Lem: effective radius}. Given an edge in a geodesic, the following proposition allows us to build upon this geodesic a bypass to avoid this edge.
%
\begin{prop} \label{prop:bypass}
 Let $x,y \in \Z^d$ and $\gamma\in \sO_{H}(x,y)$ be a geodesic of $\rmT_H(x,y)$. Suppose that $e \in \gamma$ is an edge satisfying $x,y \not \in \Lambda_{3R_e}(e)$. Then there exists another path $\eta_e$ between $x$ and $y$ such that: 
 \begin{itemize}
 \item[(a)] $\eta_e \cap \Lambda_{R_e-1}(e)=\emptyset$ and $\eta_e \setminus \gamma$ consists only of $q$-open edges; 
 \item[(b)] $|\eta_e \setminus \gamma| \leq C_* R_e$.
 \end{itemize} 
\end{prop}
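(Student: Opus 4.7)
The plan is to build $\eta_e$ by splicing two sub-paths of $\gamma$ with a short $q$-open detour through the annulus $\aAr$, whose existence is exactly what the definition of the effective radius $R_e$ provides. Since $R_e \geq 3$, both endpoints of $e$ sit in $\Lambda_{R_e-1}(e)$, whereas by hypothesis $x,y \notin \Lambda_{3R_e}(e)$; consequently the geodesic $\gamma = (v_0, \ldots, v_n)$ must enter and leave $\Lambda_{R_e}(e)$.

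First I would isolate two sub-paths of $\gamma$ that cross the annulus $\aAr$. Let $s_1$ (resp.~$s_2$) be the first (resp.~last) index at which $\gamma$ meets $\Lambda_{R_e}(e)$, so that $v_{s_i} \in \partial\Lambda_{R_e}(e)$. Let $\sigma_1 < s_1$ (resp.~$\sigma_2 > s_2$) be the last (resp.~first) visit of $\gamma$ to $\partial\Lambda_{3R_e}(e)$ before $s_1$ (resp.~after $s_2$); these are well-defined because $x,y \notin \Lambda_{3R_e}(e)$. The extremal choice of $\sigma_1,\sigma_2$ together with the unit $\ell_1$-step property of $\gamma$ forces the sub-paths $P_1 := (v_{\sigma_1}, \ldots, v_{s_1})$ and $P_2 := (v_{s_2}, \ldots, v_{\sigma_2})$ to stay inside $\Lambda_{3R_e}(e) \subset \Lambda_{C_* R_e}(e)$ (recall $C_* \geq 3$) and to qualify as crossing paths of $\aAr$. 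Since $\gamma \in \sO_H(\Z^d)$, Remark~\ref{rem:omkx} yields $P_1, P_2 \in \sO_H(\Lambda_{C_* R_e}(e)) \cap \sC(\aAr)$, so the defining property of $R_e$ produces a $q$-open path $\rho \subset \aAr$ of length at most $C_* R_e$ joining some $u \in P_1$ to some $v \in P_2$.

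I then take $\eta_e$ to be the concatenation of the sub-path of $\gamma$ from $x$ to $u$, the detour $\rho$, and the sub-path of $\gamma$ from $v$ to $y$, performing a loop-erasure if necessary to obtain a simple path (which can only remove edges and therefore improve the bound in (b)). All vertices $v_i$ on the retained pieces of $\gamma$ satisfy $i \leq s_1$ or $i \geq s_2$, hence $\|v_i - x_e\|_\infty \geq R_e$, so they lie outside $\Lambda_{R_e-1}(e)$; the detour $\rho \subset \aAr$ is also disjoint from $\Lambda_{R_e}(e) \supset \Lambda_{R_e-1}(e)$. This gives the first half of (a); for the second half, the only edges of $\eta_e$ not inherited from $\gamma$ come from $\rho$ and are hence $q$-open. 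Finally $|\eta_e \setminus \gamma| \leq |\rho| \leq C_* R_e$ gives (b).

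The only delicate point is verifying in the first step that $P_1, P_2$ really stay inside $\Lambda_{3R_e}(e)$ and cross $\aAr$ in the sense required by the definition of $R_e$: any excursion of $\gamma$ outside $\Lambda_{3R_e}(e)$ between $\sigma_i$ and $s_i$ would produce another hit of $\partial\Lambda_{3R_e}(e)$ and contradict the extremality of $\sigma_i$. Everything else---the invocation of Remark~\ref{rem:omkx}, the application of the $R_e$-definition, and the loop-erasure---is standard.
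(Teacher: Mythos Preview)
Your argument is correct and matches the paper's proof essentially line for line: your indices $s_1,s_2,\sigma_1,\sigma_2$ are exactly the paper's $i_+,o_-,i_-,o_+$, the two crossing sub-paths $P_1,P_2$ coincide with the paper's $\gamma_1,\gamma_2$, and the detour $\rho$ is the paper's $\tilde{\eta}_e$. The paper does not spell out the loop-erasure step, but otherwise the constructions and verifications of (a) and (b) are identical.
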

The proofs are postponed until Appendix since they are standard in percolation theory.
\subsection{Lattice animals of dependent weight} 
To manage the cumulative cost of edge resampling, we aim to estimate the sum of effective radii along a random path. While these effective radii are not mutually independent, their interdependence is relatively local (Remark \ref{rem:weak}). We utilize lattice animal theory to provide an upper bound for the sum of these radii. We first revisit a result that controls the total weight of paths in dependent environments in \cite{CNg} using the theory of greedy lattice animals. 

Let $\mathcal{P}_L$ denote the set of all paths $\gamma$ starting at $0$ and satisfying $|\gamma|\leq L$. Given $A >0$ and  $N\in \N$,   a collection of Bernoulli random variables $(I_{e,N})_{ e \in \cE(\Z^d)}$ is called  $AN$-dependent if for all $e \in \cE(\Z^d)$, the variable $I_{e,N}$ is independent of  the random variables $(I_{e',N})_{e' \not \in \kE(\Lambda_{AN}(e))}$.
For any $\gamma\in \kP_L$, we define
\begin{align*}
 {\rm \Gamma}(\gamma) := \sum_{e\in \gamma}I_{e,N}, \quad {\rm \Gamma}_{L,N} := \max_{\gamma \in \mathcal{P}_L} {\rm \Gamma}(\gamma).
\end{align*}
\begin{lem} \cite[Lemma 2.6]{nakajima2019first} \label{lemmaxbound}
 Let $A>0$, $N\in \N$ and   a collection of $AN$-dependent Bernoulli random variables $(I_{e,N})_{ e \in \cE(\Z^d)}$. There exists a positive constant $C$ depending on $A, d$ such that for all $L \in \N$,
\be{
\E[{\rm \Gamma}_{L,N}] \leq C L N^{d} q_N^{1/d}, \quad \textrm{where} \quad q_N :=\sup_{e \in \kE(\Z^d)} \E[I_{e,N}].
}
\end{lem}

\begin{proof}
 We give a simplified proof here. Given $A\in \N$, let us consider a decomposition $\kE(\Z^d)=\bigcup_{i=1}^{(2d A N)^d} E_i$ such that for each $E_i$,  $\rmd_{\infty}(\{x,y\},\{x',y'\})\geq 2AN$ for all $e=( x,y)\neq e'=( x',y') \in E_i$ (see 
 \cite[Lemma 2.6]{nakajima2019first} for a concrete example). This implies that for all $1 \leq i \leq (2dAN)^d$, the random variables $(I_{e,N})_{ e \in E_i}$ are independent. Let $(\bar{I}_{e,N})_{e\in \kE(\Z^d)}$ be i.i.d. Bernoulli random variables with mean $q_N$. Notice that the family of random variables $(I_{e,N})_{ e \in E_i}$ is stochastically dominated by $(\bar{I}_{e,N})_{ e \in E_i}$, since $ \E[I_{e,N}] \leq q_N$  for all $e \in \kE(\Z^d)$. Therefore, for any $L\in \N$, we have 
 \al{
\E[{\rm \Gamma}_{L,N}] &\leq \sum_{i=1}^{(2d A N)^d} \E\left[\max_{\gamma\in \kP_L} \sum_{e\in \gamma\cap E_i} I_{e,N}\right]  \leq \sum_{i=1}^{(2d A N)^d} \E\left[\max_{\gamma\in \kP_L} \sum_{e\in \gamma\cap E_i} \bar{I}_{e,N}\right] \leq \sum_{i=1}^{(2d A N)^d} \E\left[\max_{\gamma\in \kP_L} \sum_{e\in \gamma} \bar{I}_{e,N}\right].
 }
 Using \cite[Lemma 6.8]{damron2015sublinear}, 
 $\E\left[\max_{\gamma\in \kP_L} \sum_{e\in \gamma} \bar{I}_{e,N}\right] \leq \kO( L q_N^{1/d}),$ which yields the claim.
\end{proof}
 The following result controls the total weight of an arbitrary random path. 
\begin{lem} \label{Lem: lattice animal} 
Let $A>0$ and $(X_e)_{e \in \cE(\Z^d)}$ be a family of non-negative random variables such that for all $e \in \cE(\Z^d)$ and $ N \in \N$, 
\begin{align}\label{Pd}
\textrm{the event } \{N-1 \leq X_e < N\} \textrm{ is independent of } (X_{e'})_{e'\in \kE(\Z^d \setminus \Lambda_{A N}(e))}.
 \end{align} 
We define \(q_N := \sup_{e\in \kE(\Z^d) } \pr(N-1 \leq X_e < N).\) Let $f:[0,\infty) \to [0,\infty)$ be a function satisfying
\begin{align}\label{hd}
 B:=\sum_{N=1}^{\infty} (f_*(N))^2 N^{d} q_N^{1/d}< \infty, \quad \textrm{ where} \quad f_*(N):=\sup_{N-1\leq x< N} f(x).
 \end{align} 
Then there exists $C=C(A,B)>0$ such that for all random paths $\gamma$ starting from $0$ in the same probability space as $(X_e)_{e \in \kE(\Z^d) }$, and $L \in \N$,
\begin{align*}
 \E\left[ \sum_{e \in \gamma} f(X_{e}) \right] \leq CL + C \sum_{\ell \geq L} \ell (\pr(|\gamma|=\ell))^{1/2}.
 \end{align*}
\end{lem}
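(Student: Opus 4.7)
The plan is to dominate $\sum_{e \in \gamma} f(X_e)$ by a level decomposition and then apply Cauchy--Schwarz twice, with Lemma~\ref{lemmaxbound} as the sole probabilistic input. Set $I_{e,N} := \mathbf{1}\{N-1 \le X_e < N\}$ and $Z_N(\gamma) := \sum_{e \in \gamma} I_{e,N}$. The bound $f(X_e) \le f_*(N)$ on $\{X_e \in [N-1,N)\}$ gives the pathwise inequality $\sum_{e \in \gamma} f(X_e) \le \sum_{N} f_*(N) Z_N(\gamma)$, while the identity $\sum_N I_{e,N} = 1$ forces $\sum_N Z_N(\gamma) = |\gamma|$. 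Discrete Cauchy--Schwarz then yields the key pointwise inequality
\[
\sum_{e \in \gamma} f(X_e) \;\le\; \sqrt{|\gamma|}\,\sqrt{\textstyle\sum_N f_*(N)^2\, Z_N(\gamma)}.
\]
By assumption~\eqref{Pd}, the Bernoulli family $(I_{e,N})_{e \in \kE(\Z^d)}$ has range of dependence $AN$, so Lemma~\ref{lemmaxbound} gives $\E[\Gamma_{M,N}] \le C(A,d)\, M N^d q_N^{1/d}$ for every $M \in \N$. Since $\gamma$ starts at the origin, $|\gamma| \le M$ forces $\gamma \subset \Lambda_M$, hence $Z_N(\gamma)\mathbf{1}_{|\gamma|\le M} \le \Gamma_{M,N}$, and summing against $f_*(N)^2$ produces the master estimate
\[
\E\Bigl[\textstyle\sum_N f_*(N)^2 Z_N(\gamma);\, |\gamma|\le M\Bigr] \;\le\; CM \sum_N f_*(N)^2 N^d q_N^{1/d} \;=\; CMB.
\]

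Next I would split the expectation according to $|\gamma|\le L$ versus $|\gamma|>L$. In the short regime, Cauchy--Schwarz for expectations applied to the pointwise bound (with $\mathbf{1}_{|\gamma|\le L}$ absorbed symmetrically into both factors using $\mathbf{1}^2=\mathbf{1}$) and the master estimate with $M=L$ give
\[
\E\Bigl[\sum_{e\in\gamma} f(X_e);\, |\gamma|\le L\Bigr] \;\le\; \sqrt{\E\bigl[|\gamma|\,\mathbf{1}_{|\gamma|\le L}\bigr]}\;\sqrt{\E\Bigl[\textstyle\sum_N f_*(N)^2 Z_N(\gamma);\, |\gamma|\le L\Bigr]} \;\le\; \sqrt{L}\cdot \sqrt{CLB} \;=\; L\sqrt{CB}.
\]
In the long regime, on $\{|\gamma|=\ell\}$ the pointwise bound simplifies to $\sum_{e\in\gamma} f(X_e) \le \sqrt{\ell}\,\sqrt{\sum_N f_*(N)^2 Z_N(\gamma)}$, and Cauchy--Schwarz against $\mathbf{1}_{|\gamma|=\ell}$, combined with the master estimate for $M=\ell$, yields
\[
\E\Bigl[\sum_{e\in\gamma} f(X_e);\, |\gamma|=\ell\Bigr] \;\le\; \sqrt{\ell}\cdot \sqrt{C\ell B\cdot \pr(|\gamma|=\ell)} \;=\; \sqrt{CB}\,\ell\sqrt{\pr(|\gamma|=\ell)}.
\]
Summing over $\ell > L$ and combining with the short regime produces the claim with $C(A,B) := \sqrt{C(A,d)\, B}$.

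The only non-trivial ingredient is Lemma~\ref{lemmaxbound}; the rest is algebra. The delicate step is the first (pointwise) Cauchy--Schwarz, which trades a factor of $f_*(N)$ for a factor of $\sqrt{|\gamma|}$ via the identity $\sum_N Z_N(\gamma) = |\gamma|$. Without this identity, a more naive application of Lemma~\ref{lemmaxbound} would require controlling the a priori larger quantity $\sum_N f_*(N) N^{d/2} q_N^{1/(2d)}$, which does not follow from $B < \infty$ alone and would demand a strictly stronger assumption on $f$.
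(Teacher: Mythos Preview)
Your proof is correct and follows essentially the same approach as the paper: split on $|\gamma|\le L$ versus $|\gamma|>L$, use the pointwise Cauchy--Schwarz bound $\sum_{e\in\gamma} f(X_e)\le \sqrt{|\gamma|}\,\sqrt{\sum_N f_*(N)^2 Z_N(\gamma)}$ (the paper writes this as $(\sum_e f(X_e))^2\le |\gamma|\sum_e f^2(X_e)$ and then decomposes $f^2$ by levels, which is the same inequality), and then feed in Lemma~\ref{lemmaxbound} after a second Cauchy--Schwarz on the expectation. The only cosmetic difference is that the paper first passes to the deterministic maximum $\max_{\gamma'\in\mathcal{P}_m}$ before applying Cauchy--Schwarz, whereas you bound $Z_N(\gamma)\mathbf{1}_{|\gamma|\le M}\le \Gamma_{M,N}$ directly; both routes give the same estimate.
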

\begin{proof}
 By Cauchy-Schwarz inequality, we have
 \begin{align}\label{separate}
 \E\left[ \sum_{e \in \gamma} f(X_{e}) \right] & =\E\left[ \sum_{e \in \gamma} f(X_{e})  \1_{(|\gamma| < L)} \right] + \E\left[ \sum_{e \in \gamma} f(X_{e}) \1_{|\gamma| \geq L} \right] \notag \\
 &\leq \E\left[ \max_{\gamma \in \mathcal{P}_L} \sum_{e \in \gamma} f(X_{e}) \right] + \sum_{\ell=L}^{\infty} \E\left[ \sum_{e \in \gamma} f(X_{e}) \1_{|\gamma| =\ell} \right] \notag \\
 & \leq \left(\E\left[ \left( \max_{ \gamma \in \mathcal{P}_{L}} \sum_{e \in \gamma} f(X_{e}) \right)^2 \right] \right)^{1/2}+ \sum_{\ell=L}^{\infty} \left(\E\left[ \Big( \max_{ \gamma \in \mathcal{P}_{\ell}} \sum_{e \in \gamma} f(X_{e}) \Big)^2 \right] \right)^{1/2} (\pr[|\gamma| =\ell])^{1/2}.
 \end{align}
Let $m\geq L$. By Cauchy-Schwarz inequality we know that $(\sum_{i=1}^n x_i)^2 \leq n \sum_{i=1}^n x_i^2$,
\begin{align} \label{max}
\E\left[ \left( \max_{ \gamma \in \mathcal{P}_{m}} \sum_{e \in \gamma} f(X_{e}) \right)^2 \right]= \E\left[  \max_{ \gamma \in \mathcal{P}_{m}} \left( \sum_{e \in \gamma} f(X_{e}) \right)^2 \right] \leq \E\left[ \max_{ \gamma \in \mathcal{P}_{m}} | \gamma| \sum_{e \in \gamma} f^2(X_{e}) \right] \leq m\E\left[ \max_{ \gamma \in \mathcal{P}_{m}} \sum_{e \in \gamma} f^2(X_{e}) \right].
\end{align}
Let $ I_{e,N} := \1_{N-1 \leq X_e < N}$. We have
$$\sum_{e \in \gamma} f^2(X_{e}) = \sum_{e \in \gamma} \sum_{N\geq 1}f^2(X_e)I_{e,N} \leq \sum_{N\geq 1}  (f_*(N))^2 \sum_{e \in \gamma} I_{e,N}.$$ Let $ \Gamma_{m,N}:= \max_{\gamma \in \mathcal{P}_m}\sum_{e \in \gamma} I_{e,N}.$ Therefore, 
\begin{align} \label{boudnmaxsquare1}
 \E \left[ \max_{\gamma \in \mathcal{P}_m} \sum_{e \in \gamma} f^2(X_{e}) \right] & \leq \E \left[\sum_{N\geq 1}  (f_*(N))^2 \max_{\gamma \in \mathcal{P}_m}\sum_{e \in \gamma} I_{e,N} \right] = \sum_{N\geq 1}  (f_*(N))^2 \E \left[ \Gamma_{m,N} \right].
\end{align}
 By Lemma \ref{lemmaxbound} with \eqref{Pd}, for all $N \geq 1$, $\E [\Gamma_{m,N}] =\kO(m) N^{d} q_N^{1/d}.$ Combined with \eqref{boudnmaxsquare1}, this yields
\begin{align*}
 \E \left[ \max_{ \gamma \in \mathcal{P}_{m}} \sum_{e \in \gamma} f^2(X_{e}) \right] & =\kO( m) \sum_{N \geq 1}  (f_*(N))^2 N^{d} q_N^{1/d} = \kO(m),
\end{align*}
by the assumption \eqref{hd} on $f$. Finally, combining this with \eqref{separate} and \eqref{max}, we derive the claim. \end{proof}
Applying Lemma \ref{Lem: lattice animal} with $X_e= R_e\1_{R_e \leq M}$, $A=2C_*$, and $f(x)=x$, since the conditions \eqref{Pd} and \eqref{hd} follow from Remark \ref{rem:weak} and Proposition \ref{Lem: effective radius} respectively, we have the following:
\begin{cor} \label{corre}
For any $C>0$, there exists $C'$ such that the following holds. For all $L\in\N$ and  a random path $\gamma$ starting from $0$ satisfying $\pr( |\gamma| =\ell) \leq \ell^{-5}$ for all $\ell\geq C L$, we have
 \be{
 \E\left[ \sum_{e \in \gamma} R_{e} \1_{R_e \leq M} \right] 
 \leq C' L.
 }
\end{cor}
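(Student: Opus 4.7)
The plan is to apply Lemma~\ref{Lem: lattice animal} directly with the choices $X_e := R_e \1_{R_e \leq M}$, $A := 2C_*$, and $f(x) := x$, and then use the polynomial tail hypothesis on $|\gamma|$ to control the residual sum produced by the lemma. The two things to check are the locality condition~\eqref{Pd} and the summability condition~\eqref{hd}, both with bounds that are uniform in $M$ so that the output constant $C'$ does not deteriorate.

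For~\eqref{Pd}: since $R_e$ is $\N$-valued with $R_e \geq 3$, the event $\{N-1 \leq X_e < N\}$ is empty unless $N \in \{4,\dots,M+1\}$, in which case it coincides with $\{R_e = N-1\}$. By Remark~\ref{rem:weak}, this event is measurable with respect to the edge weights inside $\Lambda_{C_*(N-1)}(e) \subset \Lambda_{2C_* N}(e)$, which verifies~\eqref{Pd} with $A = 2C_*$. For~\eqref{hd} with $f_*(N) = N$, set $q_N := \sup_e \pr(R_e = N-1)$; by construction $q_N = 0$ for $N > M+1$, while for $2 \leq N \leq M+1$ Proposition~\ref{Lem: effective radius} gives $q_N \leq \pr(R_e \geq N-1) \leq c^{-1} e^{-c\sqrt{N-1}}$ (using that $N-1 \leq M$ falls within the range of validity $[cH^2]$ of that proposition for $H$ chosen appropriately in the main proof). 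Hence
\begin{align*}
B \;\leq\; \sum_{N=1}^{\infty} N^{d+2}\,\bigl(c^{-1} e^{-c\sqrt{N-1}}\bigr)^{1/d} \;<\; \infty,
\end{align*}
and, crucially, this bound is independent of $M$.

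With the hypotheses of Lemma~\ref{Lem: lattice animal} satisfied, I would apply it with its threshold parameter taken to be $CL$ rather than $L$ (the lemma is valid for every integer threshold), obtaining
\begin{align*}
\E\!\left[\sum_{e \in \gamma} R_e \1_{R_e \leq M}\right] \;\leq\; C_1 \cdot CL \;+\; C_1 \sum_{\ell \geq CL} \ell \bigl(\pr(|\gamma|=\ell)\bigr)^{1/2}
\end{align*}
for a constant $C_1$ depending only on $C_*$ and $d$ through the uniform bound on $B$. The hypothesis $\pr(|\gamma|=\ell) \leq \ell^{-5}$ on the range $\ell \geq CL$ bounds each summand by $\ell^{-3/2}$, so the tail sum is at most $\sum_{\ell\geq 1} \ell^{-3/2} =: K < \infty$, giving an $\kO(L)$ contribution. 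Setting $C' := C_1 C + C_1 K$ then yields the stated bound. The only delicate point in this plan, and where I would pay attention, is verifying that neither $B$ nor $C_1$ depends on the truncation $M$ or on $p \in [p_0,1]$; this is guaranteed by the uniformity in $p$ built into Proposition~\ref{Lem: effective radius} together with the super-polynomial decay of $e^{-c\sqrt{N}}$, so that the sum defining $B$ converges regardless of how large the cutoff $M$ is taken.
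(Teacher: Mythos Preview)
Your proposal is correct and follows exactly the approach the paper sketches: apply Lemma~\ref{Lem: lattice animal} with $X_e = R_e\1_{R_e\leq M}$, $A=2C_*$, $f(x)=x$, verify~\eqref{Pd} via Remark~\ref{rem:weak} and~\eqref{hd} via Proposition~\ref{Lem: effective radius} (uniformly in $M$), then bound the residual tail sum using the $\ell^{-5}$ hypothesis. One small inaccuracy: the event $\{0\le X_e<1\}=\{R_e>M\}$ is not empty, and it does not satisfy the locality in~\eqref{Pd} for $N=1$ with $A=2C_*$; however, since $f(0)=0$ this level contributes nothing in the proof of Lemma~\ref{Lem: lattice animal}, so the argument stands (the paper glosses over this point as well).
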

\section{Lipschitz continuity of the time constant: Proof of Theorem \ref{Theorem: main} } \label{Section: Lipschitz continuity}
 In this section, we shall apply the results of effective radius to the truncated passage time $\rmT_M^{\Lambda_K}$. Recall that $q_0=\tfrac{p_0+p_c(d)}{2}>p_c(d)$ and $\lambda$ from Section 2, such that $F([0,\lambda])\geq \max
 \left \{\tfrac{q_0}{p_0},1-\delta_0\right\}$. This implies
  $$\forall p\in [p_0,1],\quad q=pF([0, \lambda]) \geq q_0, \quad
 q_0 \leq q \leq p \leq q + \delta_0,
 $$ 
 where $\delta_0$ is a positive constant as defined 
 in Lemma  \ref{apqn}.
\subsection{Length of geodesics}
We recall some estimates on the sizes of holes and chemical distances. 
\begin{lem} \cite[Theorem 2]{pisztora1996surface} \label{lem:hole}
There exists $c=c(q_0) \in (0,1)$ such that for all $t \geq 1$, 
 \begin{align} \label{hole}
 \pr \left( \Lambda_{t} \cap \kC_{q} = \emptyset \right) \leq \pr \left( \Lambda_{t} \cap \kC_{q_0} = \emptyset \right) \leq c^{-1}\exp(-c t^{d-1}).
 \end{align}
 Consequently, for all $x \in \Z^d$ and $t >0$,
 \begin{align}\label{Claim: 2e0}
 \pr ( \|x-[x]_q\|_\infty \geq t) \leq c^{-1} \exp (-c t^{d-1}). 
 \end{align}
 \end{lem}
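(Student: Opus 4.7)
The statement is essentially Pisztora's celebrated surface-order deviation estimate for supercritical Bernoulli percolation, transferred to the $q$-open percolation. The plan is to deduce it in three steps, the first and third being elementary and the second being the deep input.

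First I would establish the monotone inclusion $\pr(\Lambda_t \cap \kC_q = \emptyset) \leq \pr(\Lambda_t \cap \kC_{q_0} = \emptyset)$. By the standard coupling of Bernoulli bond percolations via i.i.d.\ uniforms $(U_e)$ on $[0,1]$ (declaring $e$ to be $r$-open iff $U_e \leq r$), every $q_0$-open edge is $q$-open since $q \geq q_0$ by \eqref{dolam}. Consequently the $q_0$-infinite cluster is contained in some infinite $q$-open cluster, which by the Burton--Keane uniqueness theorem is $\kC_q$ itself. Therefore the event $\{\Lambda_t \cap \kC_{q_0} \neq \emptyset\}$ is contained in $\{\Lambda_t \cap \kC_q \neq \emptyset\}$, giving the first inequality.

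Second, for the key surface-order bound on $\pr(\Lambda_t \cap \kC_{q_0} = \emptyset)$, the strategy is the block renormalization of Pisztora \cite{pisztora1996surface}, which itself relies on the Grimmett--Marstrand slab result to ensure $q_0 > p_c(d)$ is robustly supercritical. Choose a mesoscopic scale $L$ and tile $\Z^d$ by disjoint boxes $\Lambda_L(Lx)$, $x \in \Z^d$. Declare such a box \emph{good} if it contains a unique crossing cluster in $\Lambda_{3L/2}(Lx)$ that touches all faces of $\Lambda_L(Lx)$; two adjacent good boxes then share the same crossing cluster. For $L$ large, the probability of being good can be driven above any threshold, so by Liggett--Schonmann--Stacey the good block process stochastically dominates a highly supercritical site percolation. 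If $\Lambda_t$ does not meet $\kC_{q_0}$, then no chain of good boxes crosses from $\Lambda_t$ to infinity, which forces the existence of a $\star$-connected surface of bad boxes separating $\Lambda_t$ from $\infty$. Any such separating surface contains at least $c (t/L)^{d-1}$ blocks, and a Peierls-type counting estimate for $\star$-connected sets then yields $\pr(\Lambda_t \cap \kC_{q_0} = \emptyset) \leq c^{-1}\exp(-c t^{d-1})$ after absorbing $L$ into the constant.

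Finally, the consequence \eqref{Claim: 2e0} is a direct translation: if $\|x-[x]_q\|_\infty \geq t$, then by definition no vertex of $\kC_q$ lies within $\rmd_1$-distance $t$ of $x$, so in particular $\Lambda_{t-1}(x) \cap \kC_q = \emptyset$. Translation invariance of the law of $\kC_q$ reduces this to $\pr(\Lambda_{t-1} \cap \kC_q = \emptyset)$, which is bounded by the first part (adjusting the constant $c$). The main obstacle in the whole argument is the second step: the Pisztora renormalization is a substantial piece of percolation machinery requiring uniqueness of the crossing cluster and its stability under block gluing, so I would simply cite \cite{pisztora1996surface} rather than reprove it.
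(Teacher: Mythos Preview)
The paper does not prove this lemma at all; it is stated with a bare citation to \cite{pisztora1996surface} and used as a black box. Your proposal is therefore strictly more detailed than the paper, and the overall strategy (monotone coupling for the first inequality, Pisztora's surface-order large deviation as the substantive input, then a translation argument for \eqref{Claim: 2e0}) is the standard and correct one.

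There is one small slip in your third step. From $\|x-[x]_q\|_\infty \geq t$ you correctly deduce that no vertex of $\kC_q$ lies within $\rmd_1$-distance $t$ of $x$ (since $[x]_q$ is $\rmd_1$-closest and $\rmd_1 \geq \rmd_\infty$). However, the $\rmd_1$-ball of radius $t-1$ is \emph{contained in} $\Lambda_{t-1}(x)$, not the other way around, so you cannot conclude $\Lambda_{t-1}(x)\cap\kC_q=\emptyset$. The correct inclusion is $\Lambda_{\lfloor (t-1)/d\rfloor}(x)\subset\{y:\|x-y\|_1\leq t-1\}$, which gives $\Lambda_{\lfloor (t-1)/d\rfloor}(x)\cap\kC_q=\emptyset$; applying \eqref{hole} at scale $(t-1)/d$ and absorbing the dimensional factor into the constant $c$ then yields \eqref{Claim: 2e0}. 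This is a cosmetic fix and does not affect the argument.
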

\begin{lem} \cite[(4.49)]{antal1996chemical}
\label{Lem: large deviation of graph distance Dlambda}
 There exists $\rho= \rho(q_0) \geq 1$ such that for all $x \in \Z^d$ and all $t \geq \rho \|x\|_{\infty} $, 
\begin{align}
 \max \{ \pr(\rmD_{q}(0,x) \in [ t,\infty) ), \, \pr(\rmD_q ([0]_q,[x]_q) \geq t) \} \leq \rho \exp(-t/\rho).
\end{align}
\end{lem}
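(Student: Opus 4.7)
The plan is to follow a block renormalization argument in the spirit of the original Antal--Pisztora proof. Since the $q$-percolation is supercritical with $q \geq q_0 > p_c(d)$, we should be able to dominate the $q$-open cluster structure by a highly supercritical site percolation on a renormalized lattice, and then reduce the chemical distance bound to a chaining argument on that renormalized lattice. The uniformity in $p \in [p_0,1]$ (and hence in $q \geq q_0$) is built in as long as every constant produced depends only on $q_0$.

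First I would fix a renormalization scale $L$, partition $\Z^d$ into disjoint boxes $B_y := Ly + \Lambda_{L/2}$ for $y \in \Z^d$, and declare $B_y$ to be a \emph{good block} if: (i) there is a unique $q$-open crossing cluster $\kC_y$ inside the enlarged box $B_y + \Lambda_L$; (ii) every $q$-open cluster inside $B_y + \Lambda_L$ of $\ell^\infty$-diameter at least $L/10$ is contained in $\kC_y$; and (iii) any two vertices of $\kC_y$ lying in $B_y$ are joined inside $B_y + \Lambda_L$ by a $q$-open path of length at most $CL$. By the surface-order uniqueness and crossing estimates for supercritical Bernoulli percolation (Grimmett--Marstrand slab percolation combined with Pisztora's renormalization, which is the same input already invoked in Lemma~\ref{lem:hole}), for any $\eta>0$ one can choose $L=L(q_0,\eta)$ so that $\pr(B_y \text{ is bad}) \leq \eta$. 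Since the event depends only on edges within a bounded enlargement of $B_y$, the good-block field is finite-range dependent, and the Liggett--Schonmann--Stacey theorem promotes it to a stochastically dominating i.i.d.\ Bernoulli field whose parameter can be made arbitrarily close to $1$.

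Next I would chain $0$ to $x$ through good blocks. Standard Peierls contour arguments on the renormalized lattice show that, once the dominating Bernoulli parameter is sufficiently close to $1$, the two good blocks containing $0$ and $x$ are linked by a nearest-neighbor path of good blocks of renormalized length $O(\|x\|_\infty/L)$ except on an event of probability at most $\rho \exp(-\|x\|_\infty/\rho)$. Within each good block, property (iii) supplies a detour of chemical cost $O(L)$ that carries the walk from the crossing cluster of one block to that of the adjacent block. Summing these contributions yields $\rmD_q(0,x) \leq \rho \|x\|_\infty$ on the corresponding event, which, rewritten at the scale $t \geq \rho \|x\|_\infty$, gives $\pr(\rmD_q(0,x) \in [t,\infty)) \leq \rho \exp(-t/\rho)$.

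Finally, the bound for $\rmD_q([0]_q,[x]_q)$ follows by conditioning. Lemma~\ref{lem:hole} yields $\pr(\|u-[u]_q\|_\infty \geq s) \leq c^{-1}\exp(-c s^{d-1})$, so with probability at least $1 - \rho\exp(-c t^{d-1}/\rho)$ both $\|[0]_q\|_\infty$ and $\|[x]_q - x\|_\infty$ are at most $t/(2\rho)$; on this event $\|[0]_q-[x]_q\|_\infty \leq \|x\|_\infty + t/\rho$, and the first bound applied with $[0]_q,[x]_q$ in the roles of $0,x$ finishes the proof after adjusting $\rho$. The main obstacle is establishing a clean quantitative form of the surface-order block estimates (i)--(iii) with constants depending only on $q_0$; this is precisely where one leans on the deeper supercritical percolation inputs of Pisztora and Grimmett--Marstrand, and the constraint $q \geq q_0 > p_c(d)$ is what keeps $\rho$ uniform in $p \in [p_0,1]$.
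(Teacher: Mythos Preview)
The paper does not give its own proof of this lemma: it is cited directly as \cite[(4.49)]{antal1996chemical} and used as a black-box input. Your sketch is a faithful outline of the original Antal--Pisztora block-renormalization argument (good blocks with a unique large crossing cluster and controlled internal chemical distance, promotion to highly supercritical i.i.d.\ site percolation via Liggett--Schonmann--Stacey, then chaining), so there is nothing to compare against in the paper itself. The only part not literally in \cite{antal1996chemical} is the bound for $\rmD_q([0]_q,[x]_q)$, which the paper also treats as immediate; your reduction via Lemma~\ref{lem:hole} is the natural one-line argument and is correct.
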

Accordingly, it is natural to expect $\rmT([0]_q, [n\mathbf{e}_1]_q)/n$ is close to $\rmT([0]_p, [n\mathbf{e}_1]_p)/n$. In fact, it was shown in \cite[Lemma 2.11]{garet2017continuity} that for all $p>p_c(d)$,
\begin{align}\label{unchanged mup}
 \mu_p = \lim_{n \to \infty} \frac{\rmT([0]_q,[n\mathbf{e}_1]_q)}{n} \quad \text{ a.s. and in } L_1.
\end{align}

Next, we cite a result on the length of a geodesic in First-passage percolation.
\begin{lem} \cite[Proposition 5.8]{kesten1986aspects} \label{lemKesten} 
Assume that $G$, the edge weight distribution in generalized First-passage percolation, satisfies $G(0)<p_c(d)$. Then there exists $c=c(G)\in (0,1)$ such that for all $\ell \in\N$,
 \begin{equation}\label{Proposition: kesten}
 \P \left(\exists \, \gamma\in \kP_*(0):\,\, |\gamma|\geq \ell,\,\rmT(\gamma) \leq c \ell \right) \leq  \exp{(-\ell/c)},
 \end{equation}
 where $\kP_*(0)$ is the set of all paths starting at $0$.
 \end{lem}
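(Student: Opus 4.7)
The plan is to combine a thresholding of the edge weights with the exponential decay of subcritical Bernoulli cluster sizes. Since $G(\{0\}) < p_c(d)$ and the measure $G$ is continuous from above at $\{0\}$, I can choose $a > 0$ with $\beta := G([0,a]) < p_c(d)$. Call an edge $e$ \emph{fast} if $\tau_e \leq a$ and \emph{slow} otherwise. The fast edges form a subcritical Bernoulli bond percolation, so by Menshikov--Aizenman--Barsky there exists $\psi = \psi(\beta) > 0$ with $\pr(|\kC_{\mathrm{fast}}(v)| \geq n) \leq e^{-\psi n}$ for every $v \in \Z^d$ and every $n \in \N$.

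The structural observation is that if $\gamma \in \kP_*(0)$ satisfies $|\gamma| \geq \ell$ and $\rmT(\gamma) \leq c\ell$ with $c < a$, then the number $m$ of slow edges on $\gamma$ is at most $c\ell/a$, since each slow edge contributes more than $a$ to $\rmT(\gamma)$. Removing the slow edges decomposes $\gamma$ into at most $m+1 \leq c\ell/a + 1$ maximal fast sub-paths whose total edge-length is at least $\ell(1 - c/a)$, each contained in the fast cluster of its starting vertex. Hence the event forces the existence of a skeleton of at most $c\ell/a + 1$ fast clusters $C_0, C_1, \ldots, C_m$ whose sizes sum to at least $\ell(1-c/a)$. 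By the Menshikov tail estimate each $|C_i|$ has exponential tail, so for $c/a$ small the expected total size is of order $c\ell/a$, which is far below $\ell(1-c/a)$; a standard large-deviation bound for sums of exponential-tail random variables then gives that the total exceeds $\ell(1-c/a)$ with probability at most $\exp(-c'\ell)$ for some $c' > 0$. Replacing $c$ by $\min(c, c')$ then yields the statement.

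The main technical obstacle is the combinatorial accounting of how the clusters $C_0, \ldots, C_m$ can be chained together by the slow edges, without introducing a factor such as $(2d)^k$ from enumerating all self-avoiding paths of length $k$. Indeed, a naive union bound over self-avoiding paths would only succeed under the strictly stronger condition $\beta < 1/(2d)$, which need not follow from $\beta < p_c(d)$ when $d \geq 2$. The remedy is to enumerate only the \emph{skeleton} of $\gamma$ --- the slow edges together with the starting vertex of each fast cluster --- and to absorb the internal choice of each fast sub-path into the subcritical cluster-size decay; equivalently, one can perform a large-scale block renormalization and invoke Liggett--Schonmann--Stacey to reduce the problem to a subcritical site percolation on the block lattice, where the naive union bound becomes effective. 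With either refinement the constant $c$ can be chosen small enough, depending on $\beta$, $a$, $\psi$ and $d$, so that summing the resulting estimate over $k \geq \ell$ gives the required $\exp(-c\ell)$ bound.
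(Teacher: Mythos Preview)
The paper does not prove this lemma; it is quoted from \cite[Proposition~5.8]{kesten1986aspects} and used as a black box, so there is no in-paper argument to compare against.

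Your outline is the standard one and matches Kesten's strategy: threshold at a level $a$ with $\beta:=G([0,a])<p_c(d)$, bound the number of slow edges along $\gamma$ by $c\ell/a$, and invoke exponential decay of subcritical cluster volumes. You also correctly isolate the only genuine difficulty, namely that a naive union bound over self-avoiding walks would need the strictly stronger hypothesis $\beta<1/\mu$ (with $\mu$ the connective constant), which is not implied by $\beta<p_c(d)$.

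The proposal is, however, incomplete at exactly this point. You name two possible fixes --- skeleton enumeration, or block renormalization plus Liggett--Schonmann--Stacey --- but carry out neither; the closing sentence simply asserts that ``the constant $c$ can be chosen small enough''. The skeleton route in particular needs real work: the clusters $C_0,\dots,C_m$ are not independent, and the branching factor for the choice of the next slow edge at step $i$ is of order $|C_i|$, so one must combine the enumeration with the probability estimate, typically via a generating-function bound together with the BK inequality (or the observation that successive clusters are edge-disjoint, so one can dominate by an i.i.d.\ sequence). This is precisely the substance of Kesten's original argument. As written, what you have is a correct plan rather than a proof; to make it self-contained you would need to execute one of the two remedies explicitly.
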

 The following result gives large deviation estimates of the length of geodesics.
 \begin{lem} \label{Lem: length of geo TMK} 
Recall that $M=(\log{n})^3$, $K=n^2$. Let $p_0>p_c(d)$. There exists $C_1 = C_1(p_0,F)>0$
such that for all $p\in [p_0,1]$, $\ell \geq C_1 n$ and $x \in \Lambda_{2n}(0)$, we have 
 \begin{align*} 
\max \{ \P( \exists \, \gamma \in \sO_M(0,x;\Lambda_K): |\gamma| \geq \ell), \,\, \P( \exists \, \gamma \in \sO_M(0,x): |\gamma| \geq \ell) \} \leq C_1 \exp(- \ell/(C_1M)).
\end{align*}
\end{lem}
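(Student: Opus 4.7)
The plan is to combine Kesten's lower bound on passage times (Lemma~\ref{lemKesten}) with a high-probability upper bound on $\rmT_M(0,x)$ obtained from a concrete path through the $q$-infinite cluster. The two bounds together force any length-$\ell$ geodesic to fall on the union of two small-probability events, one from each side.

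First, I apply Lemma~\ref{lemKesten} to the distribution $G_M$ of $\tau_e \wedge M$. Since $F(0) < p_c(d)$, by right-continuity there is $a_0 > 0$ with $F([0,a_0)) < p_c(d)$, and for $M \geq a_0$ and $p \in [p_0,1]$ one has $G_M([0,a_0)) = pF([0,a_0)) < p_c(d)$. Inspecting the standard proof (via subcritical Bernoulli percolation of $\{\tau_e < a_0\}$) shows that the Kesten constant depends only on the gap to $p_c(d)$, hence is uniform in $M$ and $p$. This yields $c_0>0$ such that
$$\P\bigl(\exists \gamma \ni 0:\ |\gamma| \geq \ell,\ \rmT_M(\gamma) \leq c_0\ell \bigr) \leq \exp(-c_0\ell).$$

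Next, I bound $\rmT_M(0,x)$ from above for $x \in \Lambda_{2n}$ by the passage time of the concatenation $0 \to [0]_q \to [x]_q \to x$: the middle piece is a $q$-open path whose edges each contribute at most $\lambda$, while the two end segments contribute at most $M$ per edge. This gives
$$\rmT_M(0,x) \leq dM\bigl(\|0-[0]_q\|_\infty + \|x-[x]_q\|_\infty\bigr) + \lambda\,\rmD_q([0]_q,[x]_q).$$
For $\rmT_M(0,x) > c_0\ell$ to occur, at least one of these three terms must exceed $c_0\ell/3$. Lemma~\ref{lem:hole} bounds each hole-size contribution by $c^{-1}\exp\bigl(-c(c_0\ell/(6dM))^{d-1}\bigr)$, and Lemma~\ref{Lem: large deviation of graph distance Dlambda} bounds the chemical-distance contribution by $\rho\exp(-c_0\ell/(3\lambda\rho))$, its hypothesis $c_0\ell/(3\lambda) \geq \rho\|x\|_\infty$ being met as soon as $\ell \geq C_1 n$ for $C_1$ large. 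Summing the three gives $\P(\rmT_M(0,x) > c_0\ell) \leq C\exp(-c\ell/M)$; the hole term is the dominant one in $d=2$ and even smaller in higher dimensions.

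A geodesic $\gamma \in \sO_M(0,x)$ with $|\gamma| \geq \ell$ is itself a path from $0$ of length $\geq \ell$ with $\rmT_M(\gamma) = \rmT_M(0,x)$, so its existence forces either the Kesten bad event or $\rmT_M(0,x) > c_0\ell$; a union bound of the two estimates above then delivers the claim. For the $\Lambda_K$-restricted version the same argument works verbatim: Kesten's estimate a fortiori controls paths lying in $\Lambda_K$, while the connecting path above has total length $O(n) \ll K = n^2$ and therefore lies inside $\Lambda_K$ on the same good event, so $\rmT_M^{\Lambda_K}(0,x)$ admits the same upper bound. The main obstacle I anticipate is the uniformity of Kesten's constant $c_0$ in $(p,M)$; once the threshold $a_0$ is fixed using $F(0) < p_c(d)$, this reduces to verifying that the standard proof of Lemma~\ref{lemKesten} yields constants depending only on $G_M([0,a_0))$, after which everything else is routine bookkeeping of three exponential tails.
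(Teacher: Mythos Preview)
Your proof follows the paper's strategy: bound $\rmT_M(0,x)$ above via a path through the $q$-cluster, then apply Lemma~\ref{lemKesten}. Two refinements from the paper are worth adopting. First, the obstacle you flag—uniformity of Kesten's constant in $(p,M)$—disappears once you observe that $F^M_p$ stochastically dominates the \emph{fixed} distribution $F^1_1$ (weights $\tau\wedge 1$ at $p=1$) for all $M\geq 1$ and $p\in[p_0,1]$; hence Lemma~\ref{lemKesten} need only be invoked once, for $F^1_1$, and no inspection of its proof is required. Second, your claim that the connecting path has total length $O(n)$ is not correct as stated: on your good event (each term $\leq c_0\ell/3$) the path has length $O(\ell)$, not $O(n)$. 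The paper handles this by an explicit case split—for $\ell>4dnM$ the trivial bound $\rmT_M^{\Lambda_K}(0,x)\leq 2dnM<c_0\ell$ already suffices, while for $\ell\leq 4dnM$ the connecting path has length $O(nM)=o(K)$ and therefore lies in $\Lambda_K$. With these two adjustments your argument is complete and matches the paper's.
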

\begin{proof}
 We first claim that there exists $C=C(q_0)>0$ such that for all $q\geq q_0$, and $\ell \geq n$ and $x \in \Lambda_{2n}$, 
\begin{align}\label{pal}
\pr(\kE_{q}^c) \leq \exp(-\ell/ (C M)), \text{ with }\kE_{q} := \{ \exists \, u \in \Lambda_{\ell/ M}(0) \cap \kC_{q}, \exists \, v \in \Lambda_{\ell/ M}(x) \cap \kC_{q}: {\rm D}_q(u,v) \leq C \ell \}.
\end{align}
 By the monotone coupling, the function  $q\mapsto \P(\kE_q)$ is increasing. Hence, it suffices to show \eqref{pal} with $q_0$. By Lemma \ref{lem:hole}, $\P(\Lambda_{\ell/ M}(z) \cap \kC_{q_0}=\emptyset)\leq e^{-\ell/ (C' M)}$ for $z\in\{0,x\}$ with some $C'=C'(q_0)>0$. Moreover, by Lemma \ref{Lem: large deviation of graph distance Dlambda}, there exists $C''=C''(q_0)>0$,
 $$\P(\exists \, u\in \Lambda_{\ell/ M}(0) \cap \kC_{q_0},\,\exists \, v\in \Lambda_{\ell/ M}(x) \cap \kC_{q_0}:~{\rm D}_{q_0}(u,v) >C'' \ell)\leq \exp(-\ell/C''),$$
 which yields \eqref{pal}. Let $q:=\P(\tau_e\leq \lambda)$. On the event $\kE_q$, there exist $u \in \Lambda_{\ell/ M}(0) \cap \kC_{q}$ and $ v \in \Lambda_{\ell/ M}(x) \cap \kC_{q}$ such that ${\rm D}_q^{\Lambda_K} (u,v) \leq C \ell$. Hence, if $\ell \leq 4dnM$, then since $\ell \leq 4dnM =o(K)$, one has ${\rm D}_q^{\Lambda_K} (u,v)={\rm D}_q(u,v).$ Thus, 
$${\rmT_M^{\Lambda_K}(u,v) \leq \lambda \rmD_q^{\Lambda_K} (u,v)= \lambda {\rm D}_q (u,v)} \leq C \lambda \ell.$$ 
Therefore, if $\ell \leq 4d n M$ and $\kE_q$ occurs, for $n$ large enough, then
\begin{align} \label{l<lnlog^3n}
 \rmT_M^{\Lambda_K}(0,x) \leq \rmT_M^{\Lambda_K}(0,u)+\rmT_M^{\Lambda_K}(u,v)+ \rmT_M^{\Lambda_K}(v, x) \leq 2d \ell + C \lambda \ell = (2d+C \lambda) \ell.
\end{align}
If $\ell > 4d n M$, then we have the same bound since $\rmT_M^{\Lambda_K}(0,x) \leq 2d M |x|_{\infty} \leq 4d n M .$  Let $C_1:= \frac{2d+ C \lambda }{c}$ with $c=c(F_{1}^1)\in (0,1)$ as in Lemma \ref{lemKesten}.  We write $\pr_G$ for the probability measure of First-passage percolation with weight distribution $G$.  By \eqref{l<lnlog^3n}, we get for all $\ell \geq n$,
\begin{align*}
 \P( \exists \, \gamma \in \sO_M(0,x;\Lambda_K): |\gamma| \geq C_1\ell, \, \kE_q) & \leq \P( \exists \, \gamma \in \sO_M(0,x;\Lambda_K): |\gamma| \geq C_1\ell, \, \rmT_M(\gamma) \leq (2d+C \lambda) \ell) \\
 & \leq \pr_{F^M_p} ( \exists \, \gamma \in \kP_*(0);~ |\gamma| \geq C_1 \ell, \rmT (\gamma) \leq c C_1 \ell ).
\end{align*}
Also, we have the same bound for $\sO_M(0,x)$ instead of $\sO_M(0,x;\Lambda_K)$. 
Since $F^M_p$ stochastically dominates $F_{1}^1$ for $n$ large enough and $F_{1}^1(0)=F(0)< p_c(d)$, the right-hand side is bounded from above by
\begin{align*} 
\pr_{F_{1}^1}( \exists \, \gamma \in \kP_*(0):~ |\gamma| \geq C_1 \ell, \rmT(\gamma)\leq c C_1 \ell)
 &  \leq \exp(- C_1 \ell/c), 
\end{align*}
thanks to Lemma \ref{lemKesten}.
Combining this with \eqref{pal}, the result follows with $\max\{C,C_1\}$ in place of $C_1$. 
\end{proof}
\subsection{Comparison of $\rmT([0]_{q}, [n\mathbf{e}_1]_{q})$ and 
$\rmT_M^{\Lambda_K}(0,n\mathbf{e}_1)$} \label{sec:comp}
\begin{prop} \label{prop: t*-tm}
For all $p\in [p_0,1]$, there exists $A=A(d,p_0,F)>0$ such that
 \aln{\label{approximation for FPT by cut off}
 \E \left[\left|\rmT([0]_q, [n\mathbf{e}_1]_q)-\rmT_M^{\Lambda_K}(0,n\mathbf{e}_1) \right| \right] \leq  AM= A(\log n)^3.
 }
\end{prop}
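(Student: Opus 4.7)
The plan is to chain together three intermediate comparisons via the triangle inequality:
\begin{align*}
\rmT_p([0]_q,[n\mathbf{e}_1]_q) \stackrel{(C)}{\approx} \rmT_M([0]_q,[n\mathbf{e}_1]_q) \stackrel{(B)}{\approx} \rmT_M^{\Lambda_K}([0]_q,[n\mathbf{e}_1]_q) \stackrel{(A)}{\approx} \rmT_M^{\Lambda_K}(0,n\mathbf{e}_1),
\end{align*}
where each $\approx$ denotes agreement up to $\kO(M)$ in $L^1$. Steps (A) and (B) are routine. For (A), the triangle inequality together with the straight $\ell_1$-path bound $\rmT_M^{\Lambda_K}(0,[0]_q) \leq M\,\|0-[0]_q\|_1$ (and analogously at $n\mathbf{e}_1$) yields expectation $\kO(M)$ by Lemma~\ref{lem:hole}. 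For (B), Lemma~\ref{Lem: length of geo TMK} applied at the endpoint pair $[0]_q,[n\mathbf{e}_1]_q \in \Lambda_{2n}$ shows that a geodesic of $\rmT_M([0]_q,[n\mathbf{e}_1]_q)$ has length $\leq C_1 n \ll K=n^2$ with probability $1-o(n^{-A})$ for every $A>0$, so it lies in $\Lambda_K$; the exceptional event is absorbed via Cauchy--Schwarz against the trivial bound $\rmT_M \leq M(2K+1)^d$.

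The main estimate is (C). The inequality $\rmT_M \leq \rmT_p$ is immediate from pointwise weight dominance $\tau_e \wedge M \leq \tau_e$. For the reverse direction, introduce the good event
\begin{align*}
\mathcal{G}_n := \bigl\{R_e \leq (\log n)^{5/2} \text{ for every } e \in \kE(\Lambda_K)\bigr\},
\end{align*}
which by Proposition~\ref{Lem: effective radius} with $H=M$ (so that $(\log n)^{5/2} \leq cM^2$ for $n$ large) and a union bound satisfies $\pr(\mathcal{G}_n^c) = o(n^{-A})$ for every $A>0$. On $\mathcal{G}_n$, let $\gamma_M$ be a geodesic of $\rmT_M([0]_q,[n\mathbf{e}_1]_q)$ and call $e \in \gamma_M$ \emph{bad} if $\tau_e > M$. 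For each bad edge $e$ at $\|\cdot\|_\infty$-distance $>3R_e$ from both endpoints, Proposition~\ref{prop:bypass} furnishes a bypass whose new edges are $q$-open and number at most $C_* R_e$, replacing the excursion of $\gamma_M$ through $\Lambda_{R_e-1}(e)$. Aggregating these bypasses yields a $p$-open path $\eta$ from $[0]_q$ to $[n\mathbf{e}_1]_q$ satisfying
\begin{align*}
\rmT_p(\eta) \;\leq\; \rmT_M(\gamma_M) + \sum_{e \text{ central bad}} \bigl(\lambda C_* R_e - M\bigr) + \kO(M),
\end{align*}
where the last $\kO(M)$ absorbs the cost of a short $q$-open reconnection near each endpoint, of chemical length $\kO((\log n)^{5/2})$ and hence weight $\kO(\lambda(\log n)^{5/2})=\kO(M)$. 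On $\mathcal{G}_n$, $M=(\log n)^3 \gg \lambda C_*(\log n)^{5/2} \geq \lambda C_* R_e$, so each summand $\lambda C_* R_e - M$ is negative; hence $\rmT_p([0]_q,[n\mathbf{e}_1]_q) \leq \rmT_p(\eta) \leq \rmT_M([0]_q,[n\mathbf{e}_1]_q) + \kO(M)$. The complement $\mathcal{G}_n^c$ contributes $o(1)$ by combining $\rmT_p([0]_q,[n\mathbf{e}_1]_q) \leq \lambda\,\rmD_q([0]_q,[n\mathbf{e}_1]_q) = \kO(n)$ from Lemma~\ref{Lem: large deviation of graph distance Dlambda} with the superpolynomial smallness of $\pr(\mathcal{G}_n^c)$ through Cauchy--Schwarz.

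The main obstacle I anticipate is the coherent aggregation of the bypasses into a single $p$-open path when the local modification regions $\Lambda_{R_e}(e)$ for distinct bad edges overlap. Since Proposition~\ref{prop:bypass} produces each bypass as a modification of the \emph{original} geodesic $\gamma_M$, pairwise disjoint bypass regions compose trivially, but overlapping ones require more care: the plan is to partition bad edges according to the connected components of $\bigcup_e \Lambda_{3R_e}(e)$ and construct a single cluster-level $q$-open detour per component, using the effective-radius definition applied to a representative edge of the cluster, while keeping the aggregate detour length bounded by $C_* \sum_e R_e$. A related technical point is the reconnection near the endpoints, where Proposition~\ref{prop:bypass} does not apply; this is handled by exiting each endpoint neighborhood via a $q$-open path inside $\kC_q$, whose existence and length control $\kO((\log n)^{5/2})$ follow from the density of $\kC_q$ ensured by Lemma~\ref{lem:hole}.
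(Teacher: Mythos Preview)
Your overall decomposition coincides with the paper's: it splits the estimate into exactly your (C) and your (A)$+$(B), and your handling of (A) and (B) is essentially the paper's argument for \eqref{tmtmk}.

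For (C), however, you have manufactured an obstacle that the paper avoids with one extra observation. You correctly note that on $\mathcal{G}_n$ one has $C_*\lambda R_e < M$ for every $e$. Now apply Proposition~\ref{prop:bypass} not to build a global path, but to a \emph{single} edge $e\in\gamma_M$ away from the endpoints: since $\gamma_M$ is a $\rmT_M$-geodesic and the bypass $\eta_e$ avoids $e$ and adds only $q$-open edges,
\[
\rmT_M(\gamma_M)\le \rmT_M(\eta_e)\le \rmT_M(\gamma_M)-\tau_e^M + C_*\lambda R_e,
\]
whence $\tau_e^M\le C_*\lambda R_e < M$, i.e.\ $\tau_e<M$. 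Thus \emph{there are no bad edges} in $\gamma_M$ outside the two endpoint boxes $\Lambda_{2M}(0)\cup\Lambda_{2M}(n\mathbf{e}_1)$; the central portion of $\gamma_M$ is already $p$-open with true weights below $M$. The aggregation of bypasses, the overlap issue, and the cluster-level detour construction all evaporate: the candidate $p$-open path is $\gamma_M$ itself, modified only inside the two $\kO(M)$-size endpoint boxes. This is exactly the paper's argument \eqref{epop}.

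One further point on the endpoint reconnection: Lemma~\ref{lem:hole} alone is not enough, since it only places $\kC_q$ near $0$, not near $\gamma_M$. You need to connect the $p$-open path $\gamma_M$ to $\kC_q$ inside the endpoint annulus, and this requires Lemma~\ref{apqn} (any long $p$-open path meets the $q$-crossing cluster). The paper invokes this via the event $\kE'_{2M}$ to locate $u\in\gamma_M\cap\kC_q$ with $\rmD_q([0]_q,u)=\kO(M)$, and similarly at $n\mathbf{e}_1$.
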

Note that \eqref{Time constant cut off at M} follows by combining \eqref{unchanged mup} and \eqref{approximation for FPT by cut off}. The proof of \eqref{approximation for FPT by cut off} is divided into
 \ben{ \label{ttm}
 \E\left[ \left|\rmT ([0]_{q}, [n\mathbf{e}_1]_q) - \rmT_M ([0]_{q}, [n\mathbf{e}_1]_q)\right|\right]\leq A_1 M= A_1 (\log n)^3,
 }
 \ben{ \label{tmtmk}
 \E \left[\left|\rmT_M([0]_{q}, [n\mathbf{e}_1]_q)-\rmT_M^{\Lambda_K}(0,n\mathbf{e}_1)\right|\right] \leq A_2M = A_2(\log n)^3,
 }
 for some positive constants $A_1=A_1(d,p_0,F)$ and $A_2=A_2(d,p_0)$.
 {
\begin{proof}[Proof of \eqref{ttm}]
Recall that $q=pF([0,\lambda])\leq p$ and an edge $e$ is $q$-open if and only if $\tau_e\leq \lambda$. Thus, 
\ben{ \label{tldq}
\max \left\{\rmT ([0]_{q}, [n\mathbf{e}_1]_q ), \rmT_M ([0]_{q}, [n\mathbf{e}_1]_q) \right\}\leq \lambda \rmD_q ([0]_{q}, [n\mathbf{e}_1]_q).
}
 Let $\gamma_M$ be a geodesic of $\rmT_M ([0]_{q}, [n\mathbf{e}_1]_q)$, with some deterministic rules  breaking ties. Define
\begin{align*}
 \kE_n :=\kE_n^{(1)}\cap \kE_n^{(2)}:= \{ \max\{\|0-[0]_{q}\|_\infty,\|n\mathbf{e}_1-[n\mathbf{e}_1]_q\|_\infty\} \leq M\} \cap \{\forall e \in \gamma_M, \, R_e \leq (\log n)^{5/2}\}.
\end{align*}
 Let $C_1$ be a positive constant as in Lemma \ref{Lem: length of geo TMK}. Note that 
 $$(\kE_n^{(2)})^c \cap \kE_n^{(1)}  \cap \{|\gamma_M| \leq C_1 n\}\subset \{\exists \, e \in\kE(\Lambda_{2C_1 n}):~ R_e \geq (\log n)^{5/2}\}.$$
 Thus, we have
\begin{align}\label{E*^c}
 \pr(\kE^c_n) & \leq 2\pr(\|0-[0]_{q}\|_\infty > M)+ \pr(\kE_n^{(1)};~|\gamma_M| > C_1 n) + \pr(\exists \, e \in\kE(\Lambda_{2C_1 n}):~ R_e \geq (\log n)^{5/2}).
\end{align}
By Lemma \ref{lem:hole}, there exists a positive constant $c=c(q_0)$, such that
\be{
\pr(\|0-[0]_{q}\|_\infty > M) \leq \exp(-c M^{d-1}).
}
Using Lemma \ref{Lem: length of geo TMK}, we have
\begin{align}\label{gammagreater than n}
 \pr( \kE_n^{(1)};~|\gamma_M| > C_1 n) \leq C_1 (2n)^{2d} \exp(- n/M)).
\end{align}
 Finally, Proposition~\ref{Lem: effective radius} yields
\begin{align} 
 & \pr( \exists \, e \in\kE(\Lambda_{2C_1 n}):~ R_e \geq (\log n)^{5/2}) \leq C_2 n^{d} \exp(-(\log n)^{5/4}/C_2), \notag
\end{align}
with some positive constant $C_2=C_2(d,p_0)$. Putting things together, we have, with some positive constant $C=C(d,p_0)$, 
\begin{align} \label{boundE^c}
 \pr(\kE^c_n) \leq C \exp(- (\log n)^{5/4}/C).
\end{align}

 We next prove that on the event $\kE_n,$
\ben{ \label{epop}
\tau_e<M\text{ and $e$ is $p$-open}, \quad \forall \, e \in \gamma_M \setminus \kE(\Lambda_{2M}(0) \cup \Lambda_{2M}(n \eb_1)).
}
Assume $\kE_n$ and $e \in \gamma_M\setminus \kE(\Lambda_{2M}(0) \cup \Lambda_{2M}(n \eb_1))$. If $[0]_{q} \in \Lambda_{3R_e}(e)$, then one has $\rmd_{\infty}(0,e) \leq \rmd_{\infty}(0,[0]_{q})+\rmd_{\infty}([0]_{q},e) \leq M+ 3R_e <  2M-1$, which contradicts $e \in \gamma_M\setminus \kE(\Lambda_{2M}(0) \cup \Lambda_{2M}(n \eb_1))$. Thus, we have $[0]_{q} \notin \Lambda_{3R_e}(e)$. Similarly, we have $[n\mathbf{e}_1]_q \notin \Lambda_{3R_e}(e)$. Applying Proposition \ref{prop:bypass} to $\gamma = \gamma_M \in \sO_M$, we obtain a path $ \eta_e$ from $[0]_{q}$ to $ [n\mathbf{e}_1]_q$ such that $e'$ is $q$-open for all $e' \in \eta_e \setminus \gamma_M$, i.e., $\tau_e \leq \lambda$, $|\eta_e \setminus \gamma_M| \leq C_*R_e$, and $e \not \in \eta_e$. 
Thus,
\begin{align*}
 \rmT_M (\gamma_M )& \leq \rmT_M (\eta_e) = \rmT_M (\eta_e \cap \gamma_M )+ \rmT_M (\eta_e \setminus \gamma_M )\\
 &\leq \rmT_M ( \gamma_M ) -\tau_e^M + \rmT_M (\eta_e \setminus \gamma_M ) \leq \rmT_M ( \gamma_M ) -\tau_e^M + C_* \lambda R_e, 
\end{align*}
 where recall that $\tau_e^M = \tau_e \wedge M$. This yields $\tau_e^M \leq C_* \lambda R_e<M$. Thus $\tau_e<M$, and $e$ is $p$-open.

For all $x \in \Z^d$ and $N\in \N$, thanks to \cite[Theorem 7.68]{grimmett1999percolation} and Lemma \ref{hole}, $$\pr(\{\exists q\text{-crossing cluster } \kC \subset \Lambda_{3N}(x)\} \cap \{\kC_q \cap \Lambda_{N}(x) \neq \emptyset \}) \geq 1 -C\exp(-N/C),$$
for some $C =C(q_0)>0$.
Then by Lemma \ref{apqn}, $C_q$ is the unique $q$-crossing cluster of $\Lambda_{3N}(x)$ with probability at least $1- C\exp(- N/C)$, for some $C =C(d,p_0) > 0$. Using Lemma \ref{apqn} again and Lemma \ref{Lem: large deviation of graph distance Dlambda}, there exist $C=C(d,p_0)>0$ such that for all $x \in \Z^d$ and $N\in \N$,
\ben{ \label{pabnx}
\pr(\kE'_N(x))\leq C\exp(-N/C),
}
where 
\be{
\kE'_N(x) := \{ \exists \, \eta\in \mathbb{O}_p(\Lambda_{3N}(x)):~ \Diam(\eta) \geq 3N/2, \eta \cap \kC_q = \emptyset \} \cup \{\exists \, u, v \in \Lambda_{3N}(x): \rmD_q(u,v) \in [CN, \infty)\}.
}
{Here, we remark that $q$ has been chosen appropriately to apply Lemma \ref{apqn}, see \eqref{dolam} and Appendix \ref{app:lamd}.} Suppose that $\kE_n^*:=\kE_n \cap \kE'_{2M}(0)^c \cap \kE'_{2M}(n \eb_1)^c$ occurs. On the event $\kE^*_n$, $\gamma_M$ crosses the annuli $\aA_{2M}(0)$ and $\aA_{2M}(n \eb_1) $. 
Hence, by \eqref{epop}, we find two vertices $u \in \gamma_M \cap \rmA_{2M}(0) \cap \kC_q$ and $v \in \gamma_M \cap \rmA_{2M}(n \eb_1) \cap \kC_q$, such that $\rmD_q([0]_{q},u), \rmD_q([n\mathbf{e}_1]_q,v) \leq 2CM$, and $\rmT_M(u,v)=\rmT(u,v)$. 
Hence, we have
\ba{
\rmT ([0]_{q}, [n\mathbf{e}_1]_q) &\leq \rmT ([0]_{q},u)+\rmT (u,v)+\rmT (v,[n\mathbf{e}_1]_q)\leq 4C \lambda M + \rmT_M ( [0]_{q}, [n\mathbf{e}_1]_q).
}
Combining this with $\rmT_M([0]_{q}, [n\mathbf{e}_1]_q) \leq \rmT([0]_{q}, [n\mathbf{e}_1]_q)$, we arrive at 
\ben{ \label{tpon}
|\rmT ([0]_{q}, [n\mathbf{e}_1]_q)- \rmT_M ([0]_{q}, [n\mathbf{e}_1]_q)| \1_{\kE^*_n} \leq 4 C \lambda M.
}
By \eqref{boundE^c} and \eqref{pabnx}, we have $\pr((\kE_n^*)^c) \leq C\exp(- (\log n)^{5/4}/(4C))$. By \eqref{tldq} and Lemma \ref{Lem: large deviation of graph distance Dlambda}, we have 
\begin{align*}
 \E \left[ \left|\rmT ([0]_{q},[n\mathbf{e}_1]_q) - \rmT_M ([0]_{q}, [n\mathbf{e}_1]_q)\right|1_{(\kE_n^*)^c} \right] &\leq 2 \lambda \E \left[ \rmD_q ([0]_{q}, [n\mathbf{e}_1]_q) 1_{(\kE_n^*)^c} \right] \\
 & \leq 2 \lambda \left(\E \left[ \rmD_q^2 ([0]_{q}, [n\mathbf{e}_1]_q)\right]\right)^{1/2}(\pr((\kE_n^*)^c))^{1/2} \\
 &  \leq 4 \sqrt{C} \lambda \rho n \exp(- (\log n)^{5/4}/(8C)),
\end{align*}
which converges to $0$ as $n \rightarrow \infty$. Combining the last two displays, we obtain \eqref{ttm}.
\end{proof}
\begin{proof}[Proof of \eqref{tmtmk}]
We have 
\begin{align*} 
 \E\left[\left|\rmT_M ([0]_{q}, [n\mathbf{e}_1]_q) - \rmT_M^{\Lambda_K}(0,n\mathbf{e}_1) \right| \right] 
 & \leq \E[ | \rmT_M ([0]_{q}, [n\mathbf{e}_1]_q)- \rmT_M(0,n\mathbf{e}_1)|]+\E\left[\left|\rmT_M(0,n\mathbf{e}_1) -\rmT_M^{\Lambda_K}(0,n\mathbf{e}_1)\right|\right].
\end{align*}
By the triangular inequality, the translation invariance, and \eqref{Claim: 2e0},  the first term is bounded from above by
\al{
\E[\rmT_M (0,[0]_{q})] + \E[\rmT_M(n \mathbf{e}_1,[n\mathbf{e}_1]_q)]\leq  2dM \E[{\rm d}_\infty(0,[0]_q)] \leq C M ,
}
for some $C = C(d,q_0)$. We now estimate the last term. We take  a geodesic of $\rmT_M(0,n\mathbf{e}_1)$, denoted by $\gamma_M$, with some deterministic rules  breaking ties. If $| \gamma_M | < n^2=K$, then $\rmT_M(0,n\mathbf{e}_1) = \rmT_M^{\Lambda_K}(0,n\mathbf{e}_1)$. 
Therefore, since $|\rmT_M(0,n\mathbf{e}_1)-\rmT_M^{\Lambda_K}(0,n\mathbf{e}_1)|\leq M n$, by  Lemma \ref{Lem: length of geo TMK}, we have
 \begin{align} \label{component 2}
 \E \left[ \left|\rmT_M(0,n\mathbf{e}_1)-\rmT_M^{\Lambda_K}(0,n\mathbf{e}_1) \right| \right] &= \E \left[ \left|\rmT_M(0,n\mathbf{e}_1)-\rmT_M^{\Lambda_K}(0,n\mathbf{e}_1) \right|\,\1_{| \gamma_M|\geq n^2} \right] \notag \\
 & \leq Mn \pr( |\gamma_M | \geq n^2 )\leq C M n \exp(-n^2/(CM)),
 \end{align}
with some $C=C(p_0,F)>0$.  This  yields \eqref{tmtmk}.
 \end{proof}
\subsection{Bound on the derivative of first passage time}
\begin{prop} \label{prop:fdb}
There exists a positive constant  $C=C(d,p_0,F)$ such that for all $p\in [p_0,1)$,
 \begin{align*}
\left| \frac{{\rm d} \E \left[{\rmT_M^{\Lambda_K}(0,n\mathbf{e}_1)} \right]}{{\rm d}p} \right|\leq Cn.
 \end{align*}
\end{prop}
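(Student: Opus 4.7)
I would follow the two-step strategy outlined in Step 2 of the introduction. First, I apply Lemma~\ref{lemrusso} with $G=F$, $L=\infty$, and $X(\tau)=T_M^{\Lambda_K}(0, n\mathbf{e}_1)$, noting that $X$ depends only on the weights $\tau_e$ for $e\in\kE(\Lambda_K)$. Russo's formula gives
\begin{equation*}
\frac{d}{dp}\E\bigl[T_M^{\Lambda_K}(0, n\mathbf{e}_1)\bigr]
= \sum_{e\in\kE(\Lambda_K)} \Bigl(\E\bigl[T^{\nu,e}_{M,\Lambda_K}\bigr] - \E\bigl[T^{+,e}_{M,\Lambda_K}\bigr]\Bigr),
\end{equation*}
where $T^{+,e}_{M,\Lambda_K}$ (resp.\ $T^{\nu,e}_{M,\Lambda_K}$) is $T_M^{\Lambda_K}$ with $\tau_e$ replaced by $\infty$ (resp.\ by an independent $\nu\sim F$). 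Monotonicity in edge weights gives $T^{+,e}_{M,\Lambda_K}\ge T^{\nu,e}_{M,\Lambda_K}\ge T^{-,e}_{M,\Lambda_K}$ (with $T^{-,e}$ setting $\tau_e=0$), so writing $\Delta_e T := T^{+,e}_{M,\Lambda_K}-T^{-,e}_{M,\Lambda_K}$ one has $\bigl|\tfrac{d}{dp}\E[T_M^{\Lambda_K}]\bigr|\le \sum_e \E[\Delta_e T]$. For a deterministic-rule geodesic $\gamma$ of $T_M^{\Lambda_K}(\xi)$, the identity $T^{+,e}_{M,\Lambda_K}=T_M^{\Lambda_K}$ for $e\notin\gamma$ (since $\gamma$ remains a valid path in $\xi^{+,e}$)---modulo the residual contributions addressed in the obstacle paragraph---reduces the bound to $\E[\sum_{e\in\gamma}\Delta_e T]$.

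Next, I bound $\Delta_e T$ for $e\in\gamma$ on the favourable event $\kA_e := \{R_e\le(\log n)^{5/2}\}\cap\{0, n\mathbf{e}_1\notin\Lambda_{3R_e}(e)\}$. Proposition~\ref{prop:bypass} supplies a bypass $\eta_e$ avoiding $e$ with $|\eta_e\setminus\gamma|\le C_* R_e$ and whose extra edges have weight at most $\lambda$. Optimality of $\gamma$ compared with $\eta_e$ in $\xi$ forces $\tau_e^M\le C_*\lambda R_e$. The same $\eta_e$ evaluated in $\xi^{+,e}$ (where its weight is unchanged since $e\notin\eta_e$) gives $T^{+,e}_{M,\Lambda_K}-T_M^{\Lambda_K}\le C_*\lambda R_e$, and the trivial inequality $T_M^{\Lambda_K}-T^{-,e}_{M,\Lambda_K}\le \tau_e^M\le C_*\lambda R_e$ handles the other half. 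Hence $\Delta_e T\le 2C_*\lambda R_e$ on $\kA_e$.

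To conclude, I apply Corollary~\ref{corre} to the geodesic $\gamma$. By Lemma~\ref{Lem: length of geo TMK} with $M=(\log n)^3$, the tail bound $\P(|\gamma|\ge\ell)\le C_1\exp(-\ell/(C_1 M))$ decays faster than $\ell^{-5}$ for $\ell\ge C_1 n$, verifying the hypothesis of the corollary and yielding $\E[\sum_{e\in\gamma} R_e\1_{R_e\le M}]=O(n)$. The complement $\bigcup_{e\in\kE(\Lambda_{2n})}\kA_e^c$ has super-polynomially small probability by a union bound with Proposition~\ref{Lem: effective radius}; since $\Delta_e T\le M$ deterministically and $|\kE(\Lambda_K)|\le (2K+1)^d\le n^{3d}$, its contribution is $o(1)$, and similarly for $\{|\gamma|\ge C_1 n\}$. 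Assembling these estimates yields $\bigl|\tfrac{d}{dp}\E[T_M^{\Lambda_K}]\bigr|\le Cn$.

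The main obstacle lies in the implicit reduction in the first step: although $T^{+,e}_{M,\Lambda_K}=T_M^{\Lambda_K}$ whenever $e\notin\gamma$, the residual term $T_M^{\Lambda_K}-T^{-,e}_{M,\Lambda_K}$ can still be positive when setting $\tau_e=0$ opens a shortcut through $e$, creating an ``$e\notin\gamma$'' contribution. Bounding this cleanly requires exploiting that $\xi$ and $\xi^{-,e}$ differ only at the single edge $e$, so the geometric content of $R_e$ remains valid and the bypass argument can be adapted to geodesics of $T_M^{\Lambda_K}(\xi^{-,e})$ through $e$; this avoids the extraneous factor of $M$ that a naive union bound would produce. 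Once these residuals are bounded by a multiple of $R_e$ analogously to the $e\in\gamma$ case, the lattice-animal estimate absorbs them into $O(n)$.
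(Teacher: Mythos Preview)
Your approach is the paper's: Russo's formula, reduction to edges on the geodesic, the bypass from Proposition~\ref{prop:bypass} giving $\Delta_e T\le O(R_e)$ on a good event, and Corollary~\ref{corre} (via the length tail of Lemma~\ref{Lem: length of geo TMK}) to sum the $R_e$'s. Two clarifications, however, simplify and complete your argument.

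The obstacle you flag dissolves without any adaptation to $\xi^{-,e}$. Since $\tau_e^M$ is an $F^M$-sample with probability $p$ and equals $M$ otherwise, one has $\E[X(\xi)]=p\,\E[X(\xi^e)]+(1-p)\,\E[X(\xi^{+,e})]$, so each Russo term rewrites as
\[
\E[X(\xi^{+,e})]-\E[X(\xi^e)]=\tfrac{1}{p}\bigl(\E[X(\xi^{+,e})]-\E[X(\xi)]\bigr).
\]
The quantity $X(\xi^{+,e})-X(\xi)$ genuinely vanishes for $e\notin\gamma(\xi)$, hence
\[
\Bigl|\tfrac{d}{dp}\E[X]\Bigr|\le\tfrac{1}{p_0}\,\E\Bigl[\sum_{e\in\gamma}\bigl(X(\xi^{+,e})-X(\xi)\bigr)\Bigr]\le\tfrac{1}{p_0}\,\E\Bigl[\sum_{e\in\gamma}\Delta_e T\Bigr],
\]
with no off-geodesic residual. (The paper phrases this step as ``$\Delta_e T=0$ for $e\notin\gamma$'', which is literally false for the reason you observed; the displayed inequality is the correct replacement.)

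Separately, Proposition~\ref{prop:bypass} is stated for $\gamma\in\sO_M$, the \emph{unrestricted} geodesics, whereas your $\gamma$ is only a geodesic of $\rmT_M^{\Lambda_K}$. The paper closes this gap by intersecting with $\kW:=\{\text{every }\rmT_M(0,n\mathbf{e}_1)\text{-geodesic has length }<K\}$, on which the two passage times and their geodesics coincide; Lemma~\ref{Lem: length of geo TMK} shows the contribution of $\kW^c$ is $o(1)$.
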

\begin{proof} Let $\Delta_e \rmT_M^{\Lambda_K}(0,n\mathbf{e}_1):=\rmT_{M,+,e}^{\Lambda_K}(0,n\mathbf{e}_1)-\rmT_{M,-,e}^{\Lambda_K}(0,n\mathbf{e}_1)$, where $\rmT_{M,\pm,e}^{\Lambda_K}(0,n\mathbf{e}_1)$ is the first passage time when the weight of the edge \( e \) is set to \( M \) for \( + \) and \( 0 \) for \( - \).
  Let $\gamma$ be a geodesic of $\rmT_{M}^{\Lambda_K}(0,n\mathbf{e}_1)$, with some deterministic rules  breaking ties. Note that the family of i.i.d. truncated weights $(\tau^M_e)_{e \in \cE(\Z^d)}$ has the same distribution as 
$$G_{p} = p (F \1_{[0,M)}+ F([M,\infty)) \delta_M) + (1-p) \delta_{M},$$
where recall that $\tau_e^M = \tau_e \wedge M$. Since $\Delta_e {\rmT_M^{\Lambda_K}(0,n\mathbf{e}_1)}=0 $ for all $e \notin \gamma$ and $\rmT_M^{\Lambda_K}(0,n\mathbf{e}_1)$ is an increasing function of weights $(\tau^M_e)_{e \in \Lambda_K}$, applying Lemma \ref{lemrusso} with $L =M,E = \kE(\Lambda_K), \xi= (\tau^M_e)_{e \in \Lambda_K}, X = \rmT_{M}^{\Lambda_K}(0,n\mathbf{e}_1)$, we have 
\begin{align}\label{rufo}
\left| \frac{{\rm d} \E \left[{\rmT_M^{\Lambda_K}(0,n\mathbf{e}_1)} \right]}{{\rm d}p} \right| & \leq \E \left[ \sum_{e \in \kE(\Lambda_K)} \Delta_e {\rmT_M^{\Lambda_K}(0,n\mathbf{e}_1)}  \right] = \E \left[ \sum_{e \in \gamma} \Delta_e {\rmT_M^{\Lambda_K}(0,n\mathbf{e}_1)}  \right].
\end{align}
We give an upper bound for \eqref{rufo}. Let $(R_e)_{e \in \kE(\Z^d)}$
and $C_* $ be 
as in Proposition \ref{Lem: effective radius}. We fix $e \in \gamma$ and define $\kU_e := \{0,n\mathbf{e}_1 \not \in \Lambda_{3R_e}(e) \}$. Define also
\begin{align*}
   \kW := \{ \forall \, \eta \in \mathbb{G}_{M}(0, n \mathbf{e}_1): |\eta| < n^2\}.
\end{align*}
Notice that if the event $\kW$ occurs then $\rmT_M(0,n\mathbf{e}_1) = \rmT_M^{\Lambda_K}(0,n\mathbf{e}_1)$, and so $\gamma$ is a geodesic of $\rmT_M(0,n\mathbf{e}_1)$. Hence, on  
$\kU_e  \cap \kW $, by Proposition \ref{prop:bypass}, there exists a path $\eta_e$ from $0 $ to $n \mathbf{e}_1$ satisfying $\eta_e \setminus \gamma$ consisting of edges with weights at most $\lambda$ and $|\eta_e\setminus \gamma| \leq C_* R_e$. Thus, on $\{R_e \leq M\} \cap \kU_e \cap \kW$, one has the bound
 \be{ \label{Disrepancy: ue}
 \Delta_e {\rmT_M^{\Lambda_K}(0,n\mathbf{e}_1)} = \rmT_{M,+,e}^{\Lambda_K}(0,n\mathbf{e}_1)-\rmT_{M,-,e}^{\Lambda_K}(0,n\mathbf{e}_1) \leq \lambda |\eta_e\setminus \gamma| \leq C_* \lambda R_e.
 }
 Otherwise, we use a trivial bound
 $\Delta_e {\rmT_M^{\Lambda_K}(0,n\mathbf{e}_1)} \leq M. $ 
We note that the event $\{R_e \leq M\} \cap \kU_e^c$ implies $ {\rm d_\infty}(0,e) \wedge {\rm d_\infty}(n \mathbf{e}_1,e) \leq 3M$. Therefore,
\begin{align} \label{Bound of sum of influence}
 \sum_{e \in \gamma } \Delta_e \rmT_M^{\Lambda_K}(0,n\mathbf{e}_1)
 & \leq C_* \lambda \sum_{e \in \gamma } R_e \1_{R_e \leq M} + M \sum_{e \in \gamma } \1_{{\rm d_\infty}(0,e) \wedge {\rm d_\infty}(n \mathbf{e}_1,e) \leq 3M} + M\sum_{e \in \gamma } \1_{R_e > M} + M |\gamma| \1_{\kW^c}  \notag \\
& \leq C_* \lambda \sum_{e \in \gamma } R_e \1_{R_e \leq M} + 4dM(6M+1)^d + M\sum_{e \in \gamma } \1_{R_e > M} + M |\gamma| \1_{\kW^c} . 
\end{align}
On the other hand, thanks to Lemma \ref{Lem: length of geo TMK}, for all $\ell \geq Cn$ with $n$ large enough,
 \ben{\label{ldga}
 \pr(|\gamma| \geq \ell ) \leq \exp(-\ell/(CM)) \leq \ell^{-5},
 }
where $C=C(p_0,F)$ is a positive constant. Therefore, using Corollary \ref{corre} with $L=n$,
\ben{\label{esre}
 \E \left[ \sum_{e \in \gamma } R_e \1_{R_e \leq M}\right] \leq C'n,
 }
with some $C'=C'(d,p_0,F)>0$. In addition, by using \eqref{ldga}, Proposition \ref{Lem: effective radius} and $M=(\log n)^3$, 
 \aln{
\E \left[\sum_{e \in \gamma } \1_{R_e > M} \right]
&\leq \E \left[\sum_{e \in \gamma } \1_{R_e > M};~|\gamma|\leq Cn \right]+\E \left[|\gamma|;~|\gamma|\geq Cn \right] \notag \\
&  \leq (2Cn+1)^{d} \, \pr(\exists \, e\in \kE( \Lambda_{Cn}): R_e > M ) + \sum_{\ell \geq Cn} \ell \, \pr(|\gamma|=\ell) \leq C_1,
}
for some constant $C_1 =C_1(p_0,F)$. By   Proposition \ref{Lem: length of geo TMK},
\ben{ \label{emwc}
\E[M |\gamma| \1_{\kW^c}] \leq M (\E[|\gamma|^2])^{1/2} (\pr(\kW^c))^{1/2}
\leq C_2,
}
for some $C_2=C_2(p_0,F)$. Combining \eqref{rufo}, \eqref{Bound of sum of influence} and \eqref{esre}--\eqref{emwc}, we  yield the desired result.
\end{proof}
\subsection{Proof of Theorem \ref{Theorem: main}}
We write $\E_u$ to emphasize that the considering parameter is $u$. By Proposition \ref{prop: t*-tm} and \eqref{unchanged mup}, and Proposition \ref{prop:fdb}, there exists  a positive constant $C=C(d,p_0,F)$ such that for all $p_1,p_2\in [p_0,1]$,
\al{
|\mu_{p_2}-\mu_{p_1}|&=\lim_{n\to\infty} \frac{1}{n}\left |\E_{p_2}\left[\rmT_M^{\Lambda_K}(0,n\mathbf{e}_1)\right]-\E_{p_1}\left[\rmT_M^{\Lambda_K}(0,n\mathbf{e}_1) \right] \right| \\
&=\lim_{n\to\infty} \frac{1}{n} \left|\int_{p_1}^{p_2} \frac{{\rm d} \E_u \left[{\rmT_M^{\Lambda_K}(0,n\mathbf{e}_1)} \right]}{{\rm d}u}{{\rm d}u} \right| \leq C |p_2-p_1|.
}
\qed

\appendix 

\section{ Russo's formula: Proof of Lemma \ref{lemrusso}} \label{Appendix: ruso}

\begin{proof}
 We enumerate  $E = \{e_1,e_2,\ldots,e_n\}$. For all vector $\mathbf{p}=(p_1,p_2,\ldots,p_n) \in [0,1)^n$, let $\xi^{\p}=(\xi^{\p}_{e_i})_{i\in [n]}$ be a collection of independent random variables with the distributions $(G_{p_i})_{i\in[n]}$.  Let $(U_i)_{i=1}^n$ be i.i.d. random variables uniformly distributed on $[0,1]$ and $s = (s_{e_i})_{i=1}^n$ i.i.d. random variables taking values on $[0,L]$ with the same distribution as $\nu$, which are independent from $(U_i)$. Let us define $\omega^{\p} =(\omega_{e_i}^\p)_{i=1}^{n}$ by
 \begin{align}
 \omega_{e_i}^\p := \1_{\{U_i \leq p_i\}} s_{e_i}+\1_{(U_i >p_i)} L .
 \end{align}
 It is clear that $\omega^\p$ has the same law as $\xi^{\p}$. 
 { Given $i \in [n]$, we consider $\hat{\omega}^\p_{e_i}$ so that $\omega^\p=(\hat{\omega}^\p_{e_i},\omega^\p_{e_i})$ to emphasize $i$ is the considering coordinate}.  
 Let $\mathbf{e}_i$ be the $i^{\text{th}}$ unit vector in $\R^n$. If $U_i \notin (p_i,p_i + \varepsilon]$, then
 $X(\omega^{\p+\varepsilon \mathbf{e}_i }) = X(\omega^\p).$ 
 Otherwise, 
 $X(\omega^{\p+ \varepsilon \mathbf{e}_i}) = X(\hat{\omega}^\p_{e_i},s_{e_i})$ and $X(\omega^{\p}) = X(\hat{\omega}^\p_{e_i},L)$. 
 Therefore, by the independence of $(U_i)_{i=1}^n$ and $(s_{e_i})_{i=1}^n$, defining 
 $f(\p) := \E[X(\xi^{\p})],$
 \begin{align*}
 f(\p+ \varepsilon \mathbf{e}_i)-f(\p) & = \E \big[(X(\hat{\omega}^\p_{e_i},s_{e_i}) - X(\hat{\omega}^\p_{e_i},L))  \1_{\{U_i \in (p_i,p_i + \varepsilon]\}}\big] = \varepsilon(\E [X(\hat{\omega}^\p_{e_i},s_{e_i})] - \E[X(\hat{\omega}^\p_{e_i},L)]).
 \end{align*}
 Let $\xi^{\p,i}$ and $\xi^{\p,+,i}$ be the configurations obtained from $\xi^{\p}$ by replacing $\xi^{\p}_{e_i}$ with $s_{e_i}$ and with $L$ respectively.  Therefore, we have
 \begin{align*}
 \dfrac{\partial f(\p)}{\partial p_i}= \lim_{\varepsilon\to 0} \frac{f(\p+\varepsilon \mathbf{e}_i)-f(\p) }{\varepsilon}= \E [X(\xi^{\p,i})] - \E [ X(\xi^{\p,+,i})].
 \end{align*}
 Combining this with the chain rule, 
 $\dfrac{{\rm d} \E[X]}{{\rm d} p} = \sum_{i=1}^n \dfrac{\partial f(\p)}{\partial p_i}\Big|_{p_1 =\ldots=p_n=p}$, we get the desired result.
\end{proof}
\section{Effect of resampling: Proof of Propositions \ref{Lem: effective radius} and \ref{prop:bypass}} \label{appendix: effect of resam}
 For $m, N \in \N$, let $\kB_N(m)$ denote the set of all boxes of side length $m$ in $\Lambda_N$.

\subsection{Choice of $\lambda$ and definition of a good box} \label{app:lamd}

\noindent Given $p_c(d)<s \leq r \leq 1$ and $m, N \in \N$, we define 
\be{
 A_{r,s,m,N} := \{\exists \, s\textrm{-crossing cluster } \kC \subset \Lambda_N,\,\exists\, \gamma \in \mathbb{O}_r(\Lambda_N): \, \Diam(\gamma)\geq m/2, \, \gamma \cap \kC = \emptyset \}.
}
\begin{lem} \label{apqn}
For all $p_0 > p_c(d)$, there exist $\delta_0 = \delta_0(p_0) >0,\,  C = C(d, p_0)>0$, such that for all $r\in [p_0,1]$, $s\in [r-\delta_0,r]$, and $N\in\N$, $(\log N)^2 \leq m\leq N$,
\be{
\pr(A_{r,s,m,N}) \leq C \exp(-m/C).
}
\end{lem}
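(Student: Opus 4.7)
I set up a block renormalization that combines Pisztora's supercritical estimates for the $q$-open edges with a subcritical estimate for the ``extra'' edges---those $e$ satisfying $\tau_e \in (\lambda,\infty)$, i.e., $p$-open but not $q$-open. Set $q_0 := (p_c(d) + p_0)/2 > p_c(d)$ and choose $\delta_0 \in (0, \min(p_0 - q_0, p_c(d)))$, so that every admissible pair $(p,q)$ satisfies $q \geq q_0$ while the extra edges form i.i.d.\ Bernoulli percolation of parameter $p - q \leq \delta_0 < p_c(d)$, which is subcritical. For a scale $K \in \N$ to be tuned, tile $\Lambda_N$ by pairwise-disjoint boxes $(B_i)$ of side $K$, and let $B_i^*$ denote the concentric box of side $3K$. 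Call $B_i$ \emph{good} if in $B_i^*$: (G1) there is a unique $q$-crossing cluster $\kC_i^q$, and every $q$-open cluster of diameter at least $K/10$ is contained in $\kC_i^q$; and (G2) every $p$-open cluster in $B_i^*$ of diameter at least $K/10$ intersects $\kC_i^q$. By Pisztora's renormalization \cite{pisztora1996surface}, (G1) fails with probability at most $Ce^{-cK^{d-1}}$. The event (G2) is controlled, for $\delta_0$ sufficiently small, by observing that any $p$-open cluster of diameter at least $K/10$ missing $\kC_i^q$ must either contain a $q$-open sub-cluster of diameter at least $K/100$ (which by (G1) lies in $\kC_i^q$, a contradiction) or be sustained entirely by extra edges chained together via finite $q$-clusters---a subcritical configuration of probability at most $Ce^{-cK}$. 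Altogether $\pr(B_i \text{ is bad}) \leq e^{-c_1 K^{d-1}}$, and this event depends only on edges in $B_i^*$, so bad events for $6K$-separated boxes are mutually independent.

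Assume $A_{p,q,m,N}$ occurs, with $q$-crossing cluster $\kC$ and $p$-open witness path $\gamma$ of diameter at least $m/2$ disjoint from $\kC$. If $\gamma$ contains a sub-path of diameter at least $K/10$ lying in a good-box enlargement $B_i^*$, then by (G2) this sub-path meets $\kC_i^q$; Pisztora's sprinkling argument merges the local $\kC_i^q$ across overlapping enlargements of adjacent good boxes into a single $q$-open super-cluster that, by uniqueness of the $q$-crossing cluster in $\Lambda_N$, coincides with $\kC$---contradicting $\gamma \cap \kC = \emptyset$. Hence every good box traversed by $\gamma$ is traversed with diameter less than $K/10$, forcing the box-path of $\gamma$ to visit at least $c_2 m/K$ distinct boxes, of which at least $c_3 m/K$ are bad; extracting a $6K$-separated subfamily yields at least $c_4 m/K$ jointly independent bad boxes arranged along a chain.

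A Peierls count over such chains then yields
\[
\pr(A_{p,q,m,N}) \leq C N^d \sum_{\ell \geq c_4 m/K} (3d)^{\ell} e^{-c_1 \ell K^{d-1}} \leq C N^d \exp(-c_5 m K^{d-2}).
\]
Fixing $K$ large enough that $c_5 K^{d-2} \geq 2$, the hypothesis $m \geq (\log N)^2$ absorbs the factor $N^d$ and gives $\pr(A_{p,q,m,N}) \leq C\exp(-m/C)$, as required.

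The main obstacle is establishing the renormalization estimate (G2), where the two-parameter nature of the problem is felt: one must show that adding a small density of extra edges to a supercritical Bernoulli percolation does not produce new macroscopic clusters disconnected from the giant $q$-cluster. Subcriticality of the extra edges ($\delta_0 < p_c(d)$) is the key input, but one must also rule out mesoscopic chains of small finite $q$-clusters linked by extra edges, requiring a joint subcritical control for the mixed percolation of finite $q$-clusters and independent Bernoulli$(\delta_0)$ edges. Once (G2) is in place, the rest of the argument follows standard supercritical renormalization technology.
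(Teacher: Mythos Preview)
Your proof plan takes a genuinely different route from the paper's. The paper's argument is considerably shorter: it first handles the case $m=N$ by combining Grimmett's uniqueness-of-large-clusters lemma (yielding $\pr(A_{q,q,N}) \leq C\exp(-N/C)$) with a Russo/sprinkling comparison of the form
\[
\pr(A_{p,q,N}) \leq \pr(A_{q,q,N})\,\exp\!\big(N\log(1+(p-q)/p)\big),
\]
so that choosing $\delta_0$ small enough yields the claim directly. The general case $(\log N)^2 \leq m \leq N$ then follows by a single-scale tiling of $\Lambda_N$ into $m$-boxes plus a union bound, using that on a good event every $m$-box carries a $q$-crossing sub-cluster of the global crossing cluster.

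Your approach renormalizes at a fixed small scale $K$ and runs a Peierls argument. You correctly identify (G2) as the heart of the matter---and indeed (G2), stated for a box of side $3K$, is precisely the case $m\sim K$ of the lemma itself. So both strategies reduce to the same base case; the difference is in how it is established. The paper's Russo/sprinkling trick dispatches it in two lines, whereas your proposed route via ``joint subcritical control for the mixed percolation of finite $q$-clusters linked by Bernoulli$(\delta_0)$ edges'' is substantially harder to make rigorous, and your sketch does not actually carry it out. In particular, your dichotomy for (G2) forces only $O(1)$ extra edges along the witness path (diameter $K/10$ chopped into $q$-pieces of diameter $<K/100$ gives at least $10$ pieces, not at least $cK$), so exponential decay in $K$ does not follow from that count alone; one would need a further input such as exponential tails for the lattice-connected components of the complement of the $q$-crossing cluster, which is itself delicate in $d\geq 3$. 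There is also a minor slip: the bad-box probability is dominated by the (G2) failure rate $e^{-cK}$, not $e^{-cK^{d-1}}$, so your Peierls exponent should be $c_5 m$ rather than $c_5 m K^{d-2}$---harmless for the conclusion, but worth correcting.

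In short: your reduction is sound once (G2) is in hand, but the sprinkling inequality used in the paper is exactly the device that sidesteps the mixed-percolation difficulty you flag as the main obstacle. That is the missing idea in your plan.
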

 \begin{proof}
 
 First, we consider the case $m=N$. For simplicity, we write $A_{r,s,N}$ for $A_{r,s,N,N}$. Let $q_0:=(p_0+p_c(d))/2$. By \cite[Lemma 7.104]{grimmett1999percolation}, for all $k,N \in \N$, $s \geq q_0$, we have 
 \ben{ \label{pdc}
 \pr(\exists \, \textrm{two $s$-open clusters $\kC_1$, $\kC_2 \subset \Lambda_N$}: \Diam(\kC_1), \Diam(\kC_2) \geq k, \kC_1 \cap \kC_2 =\emptyset) \leq CN^{2d} \exp(-k/C),
 }
with some $C=C(d,q_0)>0$.\footnote{Though \cite[Lemma 7.104]{grimmett1999percolation} is only stated in $d \geq 3$, the result also holds for planar percolation by standard arguments.}
 Consequently, 
 $ \pr(A_{s,s,N}) \leq C\exp(-N/(2C^2)).$

 Moreover, by standard use of Russo's formula, e.g., \cite[(3.4)]{garet2017continuity}, we have 
 \be{
 \pr(A_{r,s,N}) \leq \pr(A_{s,s,N}) \exp(N \log (1+(r-s)/r)).
 }
 Combining the last two displays, as long as $s$ is close enough to $r,$ we have the claim.

 Next, we consider a general $m$. 
 Using \cite[Theorem 7.68]{grimmett1999percolation} and the assumption $(\log N)^2 \leq m \leq N$,
 \be{
 \pr ( \textrm{$\exists \, s$-crossing cluster $\subset \Lambda_N$, $\exists\, s$-crossing cluster $\subset \Lambda$ for all $ \Lambda \in \kB_N(m)$}) \geq 1- C \exp(-m/C),}
with some $C=C(d,p_0)>0$. It follows from this estimate and \eqref{pdc} that
 \ben{ \label{eqmn}
 \pr(\kE_{s,m,N}) \geq 1-C\exp(-m/C),
 }
 where $C$ is a positive constant depending on $d,p_0$ and 
 \be{
 \kE_{s,m,N}:= \{ \exists \, \textrm{$s$-crossing cluster $\kC \subset \Lambda_N$ that contains a $s$-crossing cluster in $\Lambda$ for all $\Lambda \in \kB_N(m)$} \}.
 } 
Remark that if the event $A_{s,s,N}^c$ occurs, then there is at most one $s$-crossing cluster in $\Lambda_N$. Notice further that given a path $\gamma$ with $\Diam(\gamma) \geq m/2$ in $\Lambda_N$, there exist $x,y \in \gamma$ such that $\|x-y\|_\infty \geq m/2$. Let $\gamma_{x,y}$ be the sub-path of $\gamma$ from $x$ to $y$, so there exists a sub-path $\gamma' \subset \gamma_{x,y} \cap \Lambda_{m/2}(x)$ such that $\Diam(\gamma')\geq m/2$. Thus, we can find $\Lambda\in \kB_N(m)$ such that $\Lambda$ contains $\Lambda_{m/2}(x)\cap \Lambda_N$, and so $\gamma' \subset \Lambda$ with $\Diam(\gamma')\geq m/2$. Therefore, if $A_{r,s,m,N} \cap A_{s,s,N}^c \cap \kE_{s,m,N}$ occurs, then there exists a box $\Lambda \in \kB_N(m)$, a $r$-open path $\gamma' \in \O_r( \Lambda)$ with $\Diam(\gamma')\geq m/2$ and a $s$-crossing cluster $\kC' \subset \Lambda$ such that $\gamma' \cap \kC' =\emptyset$. Hence, using the claim for $A_{r,s,m}$ and $ (\log N)^2 \leq m \leq N$,
 \be{
 \pr(A_{r,s,m,N} \cap A_{s,s,N}^c \cap \kE_{s,m,N}) \leq |\kB_N(m)| \pr(A_{r,s,m}) \leq C\exp(-m/C),
 }
 with $C=C(d,p_0)$ a positive constant. Combining all together gives the desired result. 
 \end{proof} 
 With  $\delta_0$ as in Lemma \ref{apqn}, we recall that $q_0 = \tfrac{p_0 +p_c(d)}{2}$ and   $\lambda=\lambda(p_0,F)$ defined in \eqref{dolam}, such that
$$
 \quad q=pF([0, \lambda]) \geq q_0,\, q_0 \leq q \leq p \leq q + \delta_0, \, \, \forall \,  p\in [p_0,1].
$$
Hence, these parameters $p$ and $q$ satisfy the condition of Lemma \ref{apqn}, allowing us to utilize this lemma in the following result.
\begin{lem} \label{lem: crossing cluster}
 There exists $C=C(d,p_0) \geq 3$ such that for all $t \geq C$, $H >0$ and $N \in [ H^2/C]$,
 \be{
\pr(\exists \, q\textrm{-crossing cluster } \kC \subset \Lambda_N,\,\exists \, \pi \in \kP(\Lambda_N) \cap \sO_{H}(\Lambda_{tN}): \Diam(\pi)\geq N/2, \, \pi \cap \kC = \emptyset ) \leq C\exp(-\sqrt{N}/C).
} 
\end{lem}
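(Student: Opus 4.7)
The plan is to adapt the strategy of Lemma~\ref{apqn} to geodesics of the truncated passage time, by combining a $q$-open path estimate with a structural analysis of the geodesic.

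First, I would set up a good event $\kG$ at the renormalization scale $m := \sqrt{N}/C_1$ for a large constant $C_1$. Under $\kG$, (i) $\Lambda_N$ contains a unique $q$-crossing cluster $\kC$, (ii) every $m$-sub-box of $\Lambda_N$ contains a $q$-crossing sub-cluster which is part of $\kC$, and (iii) $\rmD_q^{\Lambda_N}(x,y) \leq C_0\|x-y\|_\infty$ for all $x,y \in \kC$ with $\|x-y\|_\infty \geq m$. By the Pisztora-type renormalization (cf.~\eqref{eqmn}), the uniqueness of the crossing cluster (as in the proof of Lemma~\ref{apqn}), and Lemma~\ref{Lem: large deviation of graph distance Dlambda} together with a union bound over sub-boxes, one gets $\pr(\kG^c) \leq C\exp(-\sqrt{N}/C)$. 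In parallel, Lemma~\ref{apqn} applied with $p = q$, $m' = \sqrt{N}/2$, $N' = N$ gives an event $\kH$ with $\pr(\kH^c) \leq C\exp(-\sqrt{N}/C)$, under which no $q$-open path in $\Lambda_N$ of diameter at least $\sqrt{N}/4$ avoids $\kC$.

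Next, I would argue by contradiction that on $\kG \cap \kH$ the event in the statement cannot hold. Suppose there exists a witness $\pi \in \sO_{H}(\Lambda_{tN})$ with $\pi \subset \Lambda_N$, $\Diam(\pi) \geq N/2$, and $\pi \cap \kC = \emptyset$. Under $\kH$, every maximal $q$-open sub-path of $\pi$ (edges with $\tau_e \leq \lambda$) has diameter less than $\sqrt{N}/4$. Decomposing $\pi$ into alternating $q$-open segments and ``bad'' edges ($\tau_e > \lambda$), the displacement constraint then forces at least $c_2\sqrt{N}$ bad edges on $\pi$, for a constant $c_2 > 0$ depending on $C_1$.

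Finally, I would derive a contradiction from the geodesic optimality of $\pi$ in $\Lambda_{tN}$. For a suitable sub-path $\pi_{u,v}$, the $\kC$-bypass in $\Lambda_{tN}$ (hopping from $u,v$ to nearby points of $\kC$ via paths of truncated weight at most $dmH$ each, then traversing $\kC$ along a $q$-open path of length at most $C_0\|u-v\|_\infty$) gives the upper bound $\rmT_{H}(\pi_{u,v}) \leq 2dmH + C_0\lambda\|u-v\|_\infty$, while the bad-edge count yields a lower bound of order $\lambda\|u-v\|_\infty/\sqrt{N}$ on the same quantity. The hard part is to match these bounds into a strict contradiction: the conditions $t \geq C$ (giving room to route detours outside $\Lambda_N$) and $N \leq H^2/C$, i.e.\ $H \geq \sqrt{CN}$ (which calibrates the hop cost $mH$ relative to the savings), are both essential. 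I anticipate that closing the argument cleanly may require a careful choice of the sub-path scale $\|u-v\|_\infty$, or an auxiliary reduction producing a $p$-open path from the bad-edge backbone of $\pi$ to which Lemma~\ref{apqn} can be applied directly.
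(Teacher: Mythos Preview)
Your overall architecture (good event, decompose the geodesic, build a bypass, contradict optimality) matches the paper's, but there is a genuine gap in the decomposition step that prevents the bounds from closing. You split $\pi$ into $q$-open segments and $q$-bad edges ($\tau_e>\lambda$), so each bad edge contributes only at least $\lambda$ to $\rmT_H(\pi)$. With $c_2\sqrt{N}$ such edges the lower bound is of order $\lambda\sqrt{N}$, while the bypass through $\kC$ costs of order $\sqrt{N}H+\lambda N$; since $\lambda N\gg \lambda\sqrt{N}$ for large $N$, no contradiction is possible, regardless of how you tune the sub-path scale $\|u-v\|_\infty$. This is exactly the difficulty you flag at the end, and it is not a matter of bookkeeping.

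The fix, and what the paper does, is to decompose into $p$-open segments and $p$-closed edges instead. A $p$-closed edge has $\tau_e=\infty$, hence truncated weight exactly $H$; so if $|\cl_p(\pi)|\geq \sqrt{N}/2$ then $\rmT_H(\pi)\geq \sqrt{N}H/2$. The bypass from any $x,y\in\Lambda_N$ via $[x]_q,[y]_q$ (hole size $\leq \sqrt{N}/8$ at cost $H$ per step, then $q$-open walk of length $\leq C_3N$ at cost $\lambda$ per step) has weight at most $H\sqrt{N}/4+\lambda C_3 N$, and the hypothesis $N\leq H^2/C$ is precisely what makes $\lambda C_3 N<\sqrt{N}H/4$, yielding $\rmT_H^{\Lambda_{tN}}(x,y)<\sqrt{N}H/2$ and the contradiction. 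On the other branch, if $|\cl_p(\pi)|\leq \sqrt{N}/2$ then $\pi$ contains a $p$-open sub-path of diameter $\geq\sqrt{N}$, and here you should invoke Lemma~\ref{apqn} with the genuine pair $(p,q)$ (not $p=q$): that lemma is designed so that a $p$-open path of this diameter must meet the $q$-crossing cluster. Your application of Lemma~\ref{apqn} with $p=q$ discards exactly the flexibility needed. In short: replace ``$q$-bad'' by ``$p$-closed'' throughout, and the argument goes through without any delicate scale-matching.
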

\begin{proof} 
Using Lemma \ref{apqn}, there exists $C_1 =C_1(d,p_0) > 0$ such that for all $N \geq 1$,
\be{ \label{p-qest}
\pr(\exists \, q\textrm{-crossing cluster } \kC \subset \Lambda_N,\,\exists \, \pi \in \mathbb{O}_p (\Lambda_N): \text{Diam}(\pi)\geq \sqrt{N}/2, \, \pi \cap \kC = \emptyset) \leq C_1\exp(-\sqrt{N}/C_1).
}
Hence, the result follows if there exists $C_2=C_2(q_0)>0$ such that for all $N \leq H^2/C_2$,
\begin{align}\label{epq}
\pr & (\forall \, \pi \in \kP(\Lambda_N) \cap \sO_{H}(\Lambda_{tN})\text{ with }\Diam(\pi) \geq N/2,\, \exists \,\eta \subset \pi: \eta \in \mathbb{O}_p(\Lambda_N)\text{ and } \Diam(\eta) \geq \sqrt{N}/2) \notag \\
& \geq 1- C_2 \exp(-\sqrt{N}/C_2).
\end{align} 
Let $\cl_p(\pi)$ denote the set of $p$-closed edges of $\pi$. Observe that if $\Diam(\pi) \geq N/2$ and $|\cl_p(\pi)| \leq \sqrt{N}/2$, then $\pi$ contains a $p$-open sub-path, say $\eta$, with $\Diam(\eta) \geq \sqrt{N}/2$. 
Moreover, if $\pi=(x, \ldots,y) \in \sO_H(x,y;\Lambda_{tN})$ satisfies $|\cl_p(\pi)| \geq \sqrt{N}/2$, then $\rmT_H^{\Lambda_{tN}}(x,y) = \rmT_H(\pi) \geq \sqrt{N}H/2$. Hence, it suffices to show
\ben{ \label{tmxy}
\pr(\exists \, x,y \in \Lambda_N: \rmT^{\Lambda_{tN}}_H(x,y) \geq \sqrt{N} H/2) \leq C_2\exp(-\sqrt{N}/C_2),
}
with some $C_2=C_2(q_0)>0.$ By Lemmas \ref{lem:hole} and \ref{Lem: large deviation of graph distance Dlambda}, there exists $C_3=C_3(q_0)>0$ such that
$$\begin{array}{lll}
&\pr(\kA_N)\leq C_3\exp(-\sqrt{N}/C_3), & \kA_N
:=\{\exists \, x \in \Lambda_N: \rmd_1(x,[x]_q) \geq \sqrt{N}/8\},\\ 
& \pr(\kB_
N) \leq C_3\exp(-N/C_3), &\kB_N
:=\{\exists \, u, \, v \in \Lambda_{2N} \cap \kC_q: \rmD_q(u,v) \geq C_3 N \}.
\end{array}$$
Given $x,y \in \Lambda_N$, let $\eta_x$ (resp. $\eta_y$) be a path with the shortest length in $\Z^d$-lattice from $x$ to $[x]_{q}$ (resp. from $y$ to $[y]_{q}$), and $\eta_{x,y}$ a geodesic of $\rmD_q([x]_{q}, [y]_{q})$. Construct a path from $x$ to $y$ by $\eta:=\eta_x\cup \eta_{x,y} \cup \eta_y$. On the event, $\kA_N^c \cap \kB_N^c$, for all $t \geq 2C_3$ and $x, y \in \Lambda_N$, since $\eta\in \kP(\Lambda_{2C_3N}),$
\be{
\rmT_H^{\Lambda_{tN}}(x,y) \leq \rmT_H(\eta_x) + \rmT_H(\eta_{x,y}) + {\rmT_H(\eta_y)} \leq H [\rmd_1(x,[x]_{q})+ \rmd_1(y,[y]_{q})] + \lambda \rmD_q([x]_{q},[y]_{q}) < \sqrt{ N} H/2,
}
provided that $N \leq H^2/(8C_3 \lambda)^2$. Hence, \eqref{tmxy} follows. 
\end{proof}

Recall ${\rm A}_N(e)= \Lambda_{3N}(e) \setminus \Lambda_N(e)$.
Fix $\rho$ and $C(d,p_0)$ as in Lemma \ref{Lem: large deviation of graph distance Dlambda},  \ref{lem: crossing cluster}, respectively and set 
\aln{\label{def of C-star}
N_{\rho}:= \left \lfloor N/8 \rho^2 \right \rfloor, \quad C_*:=C(d,p_0) + (48\rho
^2)^d.
}
\begin{defin}\label{Def: good annulus}
For each $e \in \kE(\Z^d)$, we say that the box $ \Lambda_{3N}(e)$ is $q$-\textbf{good} if the following hold:
\begin{itemize}
 \item [(i)] There exists a $q$-crossing cluster $\kC$ in $\Lambda_{3N}(e)$ that contains a crossing cluster in $\Lambda$ for all $\Lambda\in \kB_{3N}(N_{\rho})$,
 \item [(ii)] For all $x,y \in \aAn$ with ${\rm d}_{\infty} (\{x,y\},\partial \aAn) \geq N/2$ and ${\rm d}_{\infty} (x,y) \leq 2N_\rho$, if $\rmD_q(x,y) < \infty$, then $\rmD_q^{\aAn}(x,y) = \rmD_q(x,y) \leq 4\rho N_\rho$.
 \item [(iii)] If $\pi \in \kP(\Lambda_{3N}(e)) \cap \sO_H(\Lambda_{C_*N}(e))$ satisfies $\Diam(\pi) \geq N_\rho$, then $\pi \cap \kC \neq \emptyset$.
\end{itemize}
\end{defin}

\begin{lem} \label{Lem: being bad annulus}
 There exists  $C=C(d,p_0)>0$ such that for all $e \in \kE(\Z^d)$, $H >0$, and $N \in [ H^2/C]$
 \begin{align*}
 \pr(\Lambda_{3N}(e) \text{ is } q\textbf{-good} ) \geq 1- C\exp(-\sqrt{ N}/C).
 \end{align*}
\end{lem}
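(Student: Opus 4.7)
The plan is to write the $q$-good event as an intersection $\kG_1 \cap \kG_2 \cap \kG_3$, where each $\kG_i$ ensures the corresponding condition (i), (ii), or (iii) of Definition~\ref{Def: good annulus}, bound $\pr(\kG_i^c) \leq C\exp(-\sqrt{N}/C)$ separately, and conclude by a union bound. Several of the sub-steps will carry polynomial-in-$N$ prefactors from further union bounds, but these are absorbed into $C$ thanks to the stretched-exponential decay $\exp(-\sqrt{N})$ coming from the crossing estimate in Lemma~\ref{lem: crossing cluster}.

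The bounds for the first two conditions are comparatively routine. For (i), inequality \eqref{eqmn} from the proof of Lemma~\ref{apqn}, applied with $3N$ in place of $N$ and $m=N_\rho$, directly gives $\pr(\kG_1^c)\leq C\exp(-N_\rho/C)$, which is stronger than required since $N_\rho$ is a constant multiple of $N$. For (ii), I would apply Antal--Pisztora (Lemma~\ref{Lem: large deviation of graph distance Dlambda}) with $t=4\rho N_\rho \geq 2\rho\|x-y\|_\infty$ to each pair of integer points $x,y\in\aAn$ with $\|x-y\|_\infty\leq 2N_\rho$ and union-bound over the at most $O(N^{2d})$ such pairs; this defines $\kG_2$, on which every pair $(x,y)$ with $\rmD_q(x,y)<\infty$ satisfies $\rmD_q(x,y)\leq 4\rho N_\rho$. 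A displacement bound then shows that any realising geodesic cannot leave the ball of radius $4\rho N_\rho = N/(2\rho)\leq N/2$ around $x$, which lies inside $\aAn$ by the hypothesis $\rmd_\infty(x,\partial\aAn)\geq N/2$; therefore $\rmD_q^{\aAn}(x,y)=\rmD_q(x,y)$.

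The third condition will be the delicate step. Given $\pi \in \kP(\Lambda_{3N}(e))\cap\sO_H(\Lambda_{C_* N}(e))$ with $\Diam(\pi)\geq N_\rho$, I would start from an endpoint $u$ of $\pi$, follow $\pi$ until the first lattice point $w$ with $\|w-u\|_\infty\geq N_\rho/2$, and use the resulting sub-path $\pi'$ of diameter at least $N_\rho/2$ contained in a sub-box $\Lambda_{N_\rho/2}(x')$ of side $N_\rho$ centred at an integer point $x'$ close to $u$ (shifted if necessary so that the sub-box lies inside $\Lambda_{3N}(e)$). Remark~\ref{rem:omkx} upgrades $\pi'$ to an element of $\sO_H(\Lambda_{tN_\rho/2}(x'))$ for any $t$ with $\Lambda_{tN_\rho/2}(x')\subset\Lambda_{C_* N}(e)$; the choice $C_*=C(p_0)+(48\rho^2)^d$ in \eqref{def of C-star} is precisely tuned so that such a $t$ exceeds the constant $C(p_0)$ required by Lemma~\ref{lem: crossing cluster}, since $\|x'-e\|_\infty\leq 3N$ and $N_\rho=\lfloor N/(8\rho^2)\rfloor$. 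The translated form of Lemma~\ref{lem: crossing cluster}, with $N_\rho/2$ in place of $N$, then yields: with probability at least $1-C\exp(-\sqrt{N_\rho}/C)$, $\pi'$ meets a $q$-crossing cluster $\kC'$ of $\Lambda_{N_\rho/2}(x')$. Combining this with the uniqueness of a large $q$-cluster within each sub-box (a consequence of \eqref{pdc} union-bounded over sub-boxes) and with condition (i), $\kC'$ must be a sub-cluster of the global crossing cluster $\kC$, hence $\pi\cap\kC\neq\emptyset$. A final union bound over the $O(N^d)$ integer centres $x'$ in $\Lambda_{3N}(e)$ absorbs a polynomial prefactor into the stretched exponential and delivers $\pr(\kG_3^c)\leq C\exp(-\sqrt{N}/C)$.

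The hard part will be this scale-matching in (iii): one must simultaneously keep a sub-path of the macroscopic geodesic inside a small sub-box while preserving the geodesic property on a sufficiently large enclosing box so that Lemma~\ref{lem: crossing cluster} is applicable, and then identify the cluster it produces with the global crossing cluster guaranteed by (i). The oversized constant $C_*$ in \eqref{def of C-star} is exactly what makes these two requirements simultaneously satisfiable.
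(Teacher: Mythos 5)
Your decomposition into the three conditions of Definition~\ref{Def: good annulus}, the use of \eqref{eqmn} for (i), Antal--Pisztora with a union bound plus a displacement argument for (ii), and the extraction of a diameter-$N_\rho/2$ sub-path fed into Lemma~\ref{lem: crossing cluster} via Remark~\ref{rem:omkx} and the scale-matching built into $C_*$ for (iii), is exactly the paper's route; the probability rates match too, with (iii) supplying the limiting $\exp(-\sqrt{N}/C)$ decay. The one small detour is that in (iii) you run the argument forward and invoke a cluster-uniqueness event to identify $\kC'$ with the global cluster $\kC$, whereas the paper observes that on ``(i) holds but (iii) fails'' condition (i) already hands you a $q$-crossing cluster of $\Lambda_{N_\rho/2}(x)$ contained in $\kC$ that $\pi'$ avoids, so the bad event of Lemma~\ref{lem: crossing cluster} occurs directly---a shortcut that spares the extra union bound but does not change the conclusion.
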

\begin{proof}
 Fix $e \in \cE(\Z^d)$. Using \eqref{eqmn}, there exists a positive constant $C=C(d,p_0)$, such that
$$\pr(\textrm{$\Lambda_{3N}(e)$ does not satisfies (i)}) \leq \pr(\kE^c_{q, N_\rho, 3N}) \leq C\exp(-N/C).$$
Observe that if $\Lambda_{3N}(e) \text{ does not satisfy (ii)},$ then there exist $x,y \in \aAn$ such that ${\rm d}_{\infty} (\{x,y\},\partial \aAn) \geq N/2$, ${\rm d}_{\infty} (x,y) \leq 2N_\rho$, $\rmD_q(x,y) \in [4\rho N_\rho,\infty).$ Hence, thanks to the union bound and Lemma \ref{Lem: large deviation of graph distance Dlambda}, there exists a positive constant $C=C(d,p_0,\rho)>16\rho^2$ such that
\begin{align} \label{(ii)}
 \pr(\Lambda_{3N}(e) \text{ does not satisfy (ii)}) \leq C |\aAn|^2 \exp(-N_\rho/C)\leq C \exp(-N/(C^2)).
\end{align}
 Suppose now that $\Lambda_{3N}(e)$ satisfies (i) but not (iii). Then, there exist $\pi \in \kP( \Lambda_{3N}(e)) \cap \sO_H(\Lambda_{C_*N}(e))$ and a $q$-crossing cluster $\kC\subset \Lambda_{3N}(e)$ such that $\Diam(\pi) \geq N_\rho$, and $\kC$ crosses all $\Lambda\in \kB_{3N}(N_{\rho})$, and $\pi \cap \kC = \emptyset$. Note that there exists a vertex $x\in \Lambda_{3N}(e)$ and a sub-path {$\pi' \in \kP(\Lambda_{N_\rho/2}(x))\cap \sO_H(\Lambda_{C_*N_\rho/2}(x))$ of $\pi$}  such that $\Diam(\pi') \geq N_{\rho}/2 $ and $\pi' \cap \kC = \emptyset$. Thus, by Lemma \ref{lem: crossing cluster}, there exists $C=C(d,p_0,\rho) > 0$ such that 
\begin{align*}
 & \pr (\Lambda_{3N}(e) \text{ satisfies (i) but not (iii)}) \\
 & \leq \pr \left(\begin{array}{c}
 \exists \, x \in \Lambda_{3N}(e), \,\exists \, \textrm{$q$-crossing cluster $\kC' \subset \Lambda_{N_{\rho}/2}(x)$},
 \exists \, \pi' \in \kP(\Lambda_{N_{\rho}/2}(x))\cap \sO_H(\Lambda_{C_* N_{\rho}/2}(x)):\\
 \Diam(\pi') \geq N_\rho/2, \, \pi' \cap \kC' = \emptyset
 \end{array}
 \right) \\
 & \leq C N^{d}\exp(- \sqrt{ N}/C).
\end{align*}
Putting things together, we have the claim. 
\end{proof}
\subsection{Proof of Proposition \ref{Lem: effective radius}}
 Recall $\rho$, $N_{\rho}$, and $C_*$  from Lemma \ref{Lem: large deviation of graph distance Dlambda} and \eqref{def of C-star}. Let 
\be{
\mathcal{V}_N(e):=\{\forall \, \gamma_1, \gamma_2 \in \sO_{H}(\Lambda_{C_*N}(e)) \cap \sC(\aAn),\, 
 {\rm D}_{q}^{\aAn}(\gamma_1,\gamma_2) \leq C_*N\}.
 } Fix $e \in \kE(\Z^d)$. By the definition of $R_e$ and Lemma \ref{Lem: being bad annulus}, the result follows from
\begin{align} \label{implies E1}
 \{ \Lambda_{3N}(e) \text{ is } q\textbf{-good} \} \subset \mathcal{V}_N(e).
\end{align}
To this end, we assume that $ \Lambda_{3N}(e) \text{ is } q\textbf{-good}$. Let $\gamma_1, \gamma_2 \in \sO_{H}(\Lambda_{C_*N}(e)) \cap \sC(\aAn)$. For each $j \in \{1,2\}$, there exists a connected path $\pi_j \subset \gamma_j \cap \left\{ \Lambda_{2N+ \frac{N_\rho}{2}}(e) \setminus \Lambda_{2N -\frac{N_\rho}{2}}(e) \right\}$ satisfying 
\begin{align*}
 \forall j \in \{1,2\}, \quad \pi_j \in \kP(\Lambda_{3N}(e)) \cap \sO_H(\Lambda_{C_*N}(e)), \quad \text{diam}(\pi_j) \geq N_\rho, \quad \rmd_{\infty}(\pi_j, \partial \aAn) \geq 3N/4.
\end{align*}
 Then, by Definition \ref{Def: good annulus} (iii), we have $\pi_1 \cap \kC \neq \emptyset$ and $\pi_2 \cap \kC \neq \emptyset$, with $\kC$ the cluster crossing all sub-boxes of side-length $N_{\rho}$ of $\Lambda_{3N}$. Therefore, there exist $ u,v \in \aAn$ such that
 $ u \in \pi_1 \cap \kC$, $ v \in \pi_2 \cap \kC$, and ${\rm d}_{\infty} (\{u,v\}, \partial \aAn) \geq 3N/4$. 
 Moreover, since $\kC$ contains a crossing cluster in $\Lambda$ for all $\Lambda \in \kB_{3N}(N_\rho)$, we find a sequence of vertices $(x_i)_{i=0}^h \subset \kC$ with $h \leq (6N/N_\rho)^d=(48 \rho^2)^d$ such that
 \be{
 x_0=u, \, \, x_h=v; \quad \rmd_{\infty}(x_i,\partial \aAn) \geq N/2 ; \quad \rmd_{\infty}(x_{i-1},x_i) \leq 2 N_\rho \quad \forall \,  i \in [h].
 }
Remark further that $\rmD_q(x_{i-1},x_i)<\infty$, as $(x_i)_{i=0}^h \subset \kC$. Hence, it follows from Definition \ref{Def: good annulus} (ii) that $\rmD_q^{\aA_n(e)}(x_{i-1},x_i) \leq 4\rho N_\rho$. Therefore, $\kV_N(e)$ holds since
\be{
{\rm D}_{q}^{\aAn}(\gamma_1,\gamma_2) \leq \sum_{i=1}^{h} \rmD_{q}^{\aAn}(x_{i-1},x_{i}) \leq (6N/N_\rho)^d (4 \rho N_\rho) \leq C_*N. 
}
\qed
\subsection{Proof of Proposition \ref{prop:bypass}}
 Assume that $\gamma=(x_i)_{i=1}^\ell \in \sO_H$ is a path between $x$ and $y$ with $x,y \in \Z^d$. If $ e \in \gamma$ and $x,y \notin \Lambda_{3R_e}(e)$, then $\gamma$ crosses the annulus $\rmA_{R_e}(e)$ at least twice. The first and last sub-path of $\gamma$ crossing $\rmA_{R_e}(e)$ are defined by $\gamma_{1}=(x_{i_-},\ldots,x_{i_+})$ and $\gamma_{2}=(x_{o_-},\ldots,x_{o_+})$, where 
\al{
i_+ &:= \min\{i\geq 1: x_i \in \partial \Lambda_N\}, \quad i_- := \max\{i\leq i_+: x_i \in \partial \Lambda_{3N}\},\\
 o_- &:= \max\{i\geq 1: x_i \in \partial \Lambda_N\}, \quad o_+ := \min\{i\geq o_-: x_i \in \partial \Lambda_{3N}\}.
}
\begin{figure}[h]
  \centering
  \includegraphics[width=0.8\linewidth]{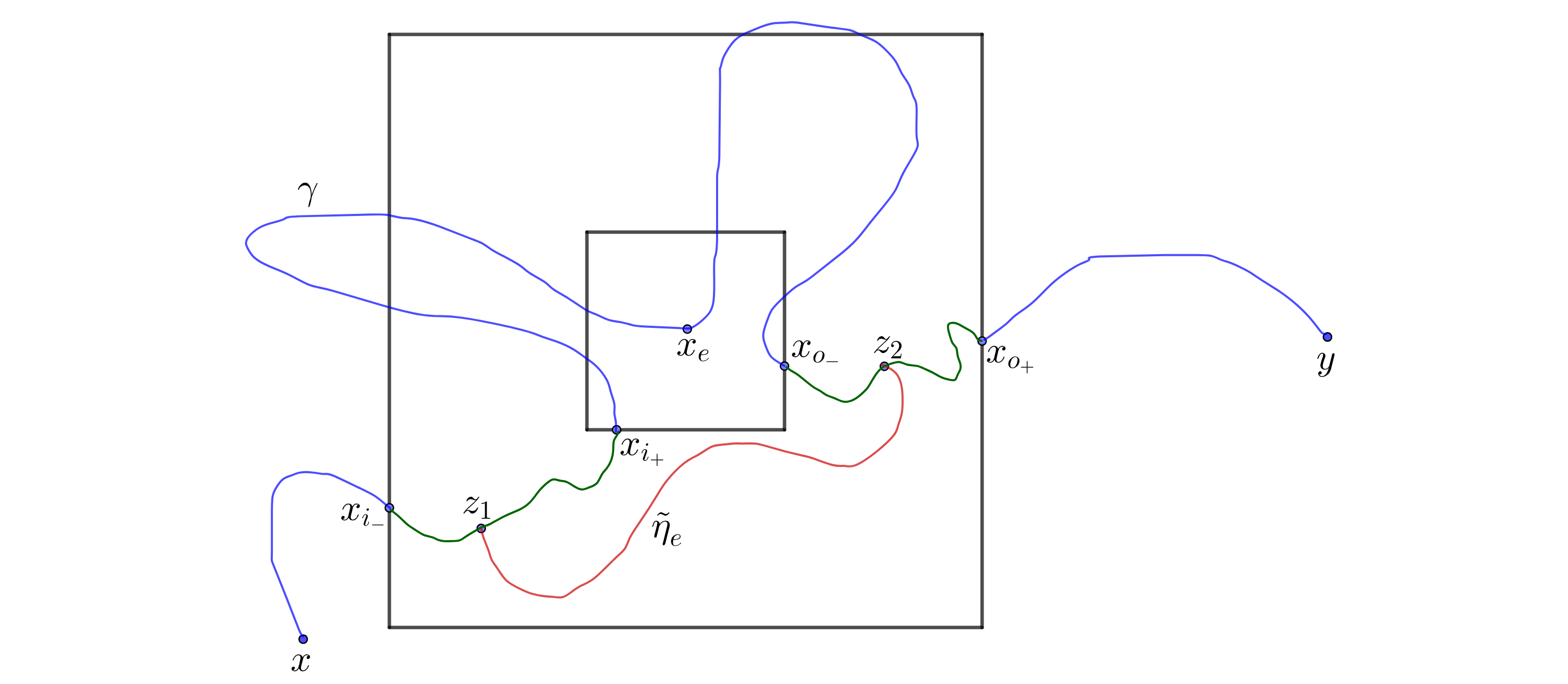}
  \caption{Construction of $\eta_e$: the green lines represent the first and last crossing paths $\gamma_1, \gamma_2$ of $\rmA_{R_e}(e)$; the red line represents the bypass $\tilde{\eta}_e$ of $e$.} 
  \label{fig2}
\end{figure}
We have
 $\gamma_1, \gamma_2 \in \sO_H$ and $\gamma_1, \gamma_2 \subset \rmA_{R_e}(e) \subset \Lambda_{C_*R_e}(e)$, which implies $\gamma_1,\gamma_2\in \sC(\rmA_{R_e}(e)) \cap \sO_H(\Lambda_{C_* R_e}(e))$. By definition of $R_e$, ${\rm D}_q^{\rmA_{R_e}(e)}(\gamma_1,\gamma_2) \leq C_*R_e$. Let $\tilde{\eta}_e$ be a geodesic of ${\rm D}_q^{\rmA_{R_e}(e)}(\gamma_1,\gamma_2)$. Then, it is a $q$-open path $\tilde{\eta}_e$ such that
 $|\tilde{\eta}_e| = {\rm D}_{q}^{\rmA_{R_e}(e)}(\gamma_1,\gamma_2) \leq C_* R_e.$ 
For $u,v \in \gamma$, we write $\gamma_{u,v}$ for the sub-path of $\gamma$ from $u$ to $v$. Let \( z_1 \) and \( z_2 \) be points where the path \( \tilde{\eta}_e \) intersects with \( \gamma_1 \) and \( \gamma_2 \), respectively. We define
$$\eta_e:= \gamma_{x,z_1} \cup \tilde{\eta}_e \cup \gamma_{z_2,y}.$$
 Notice that $|\eta_e \setminus \eta| =|\tilde{\eta}_e| \leq C_* R_e$. Furthermore, since $\gamma_1$ and $\gamma_2$ are first and last sub-path of $\gamma $ crossing $\rmA_{R_e}(e)$, one has $ \gamma_{x,z_1} \cap \Lambda_{R_e-1}(e)=\emptyset$ and $ \gamma_{z_2,y} \cap \Lambda_{R_e-1}(e) =\emptyset$. In addition, 
$\tilde{\eta}_e \cap \Lambda_{R_e-1}(e) = \emptyset $ since $\tilde{\eta}_e \subset \rmA_{R_e}(e)$. Hence, $\eta_e \cap \Lambda_{R_e-1}(e) = \emptyset $. Hence, $\eta_e$ is a desired path (see Figure \ref{fig2}).
\qed
\section{The strong convergence to time constant: Proof of Theorem \ref{Theorem: time constant}} \label{app:slln}
 Theorem \ref{Theorem: time constant} directly follows from Kingman's sub-additive ergodic theorem, e.g., \cite[Theorem 2.2]{auffinger201750}, assuming the following integrability of passage time recalling that $\tilde{\rmT}(x,y):=\rmT([x]_p,[y]_p)$.
 \begin{lem}
 If  $\E[\tau_e \1_{\tau_e < \infty}]< \infty$ and $p>p_c(d)$, then $\E[\rmT([0]_p,[\bef_1]_p)]< \infty$.
 \end{lem}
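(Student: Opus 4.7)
The plan is to decouple the percolation structure from the conditional law of the finite weights, and then to dominate the passage time by the total weight along a canonical $p$-open path between $[0]_p$ and $[\bef_1]_p$. Concretely, I would first rewrite each edge weight as $\tau_e = Y_e$ when $B_e = 1$ and $\tau_e = \infty$ when $B_e = 0$, where $(B_e, Y_e)_{e \in \kE(\Z^d)}$ is an i.i.d.\ family with $B_e \sim \mathrm{Bernoulli}(p)$ independent of $Y_e \sim F$. Setting $\mathcal{G} := \sigma(B_e : e \in \kE(\Z^d))$, the Bernoulli percolation of $p$-open edges, the infinite cluster $\kC_p$, the projections $[0]_p$ and $[\bef_1]_p$, and the chemical distance $\rmD_p([0]_p,[\bef_1]_p)$ are all $\mathcal{G}$-measurable, while $(Y_e)_{e}$ is independent of $\mathcal{G}$.

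Fixing a deterministic tie-breaking rule (say, lexicographic), let $\gamma^*$ be the canonical geodesic realizing $\rmD_p([0]_p,[\bef_1]_p)$ inside $\kC_p$. Then $\gamma^*$ is $\mathcal{G}$-measurable and consists only of $p$-open edges, so $\tau_e = Y_e$ for every $e \in \gamma^*$, and in particular $\rmT_p([0]_p,[\bef_1]_p) \leq \sum_{e \in \gamma^*} Y_e$. Using the independence of $(Y_e)_{e}$ from $\mathcal{G}$ together with the $\mathcal{G}$-measurability of $\gamma^*$, I would then compute
\begin{equation*}
\E[\rmT_p([0]_p,[\bef_1]_p)] \leq \E\!\left[\sum_{e \in \gamma^*} Y_e\right] = \E[Y_0]\cdot \E[|\gamma^*|] = \frac{\E[\tau \1_{\tau<\infty}]}{p}\cdot \E[\rmD_p([0]_p,[\bef_1]_p)].
\end{equation*}
The first factor is finite by the hypothesis $\E[\tau \1_{\tau<\infty}]<\infty$. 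For the second factor, Lemma \ref{Lem: large deviation of graph distance Dlambda} applied with $x = \bef_1$ gives $\pr(\rmD_p([0]_p,[\bef_1]_p) \geq t) \leq \rho e^{-t/\rho}$ for all $t \geq \rho$, which immediately yields $\E[\rmD_p([0]_p,[\bef_1]_p)] < \infty$ by integrating the tail. Combining the two finiteness statements completes the proof.

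There is no substantive obstacle once the decoupling is in place; the only delicate point is to verify that the canonical geodesic $\gamma^*$ is measurable with respect to $\mathcal{G}$ alone, rather than depending on the specific finite-weight values. This is guaranteed by choosing, among the at most finitely many shortest $p$-open paths between $[0]_p$ and $[\bef_1]_p$ within the a.s.\ unique infinite cluster, the lexicographically smallest one---a choice that is a deterministic function of the edge states $(B_e)_e$ and therefore independent of $(Y_e)_e$.
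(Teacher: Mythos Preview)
Your argument is correct. Both your proof and the paper's rest on the same decoupling---writing $\tau_e = Y_e \1_{B_e=1} + \infty\cdot \1_{B_e=0}$ with $(Y_e)$ independent of the percolation sigma-field $\mathcal{G}=\sigma(B_e)$---but the two executions differ. The paper introduces the random radius $X=\inf\{m:\rmD_p^{\Lambda_m}([0]_p,[\bef_1]_p)<\infty\}$ and bounds $\rmT_p([0]_p,[\bef_1]_p)$ by the crude sum $\sum_{e\in \Lambda_X}\tau_e\1_{\tau_e<\infty}$, then decouples each summand from the $\mathcal{G}$-measurable event $\{X\geq k\}$ and controls the tail of $X$ via Lemmas \ref{lem:hole} and \ref{Lem: large deviation of graph distance Dlambda}. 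Your route is tighter and shorter: you bound the passage time by the weight along a single $\mathcal{G}$-measurable path $\gamma^*$, invoke a Wald-type identity to get $\E[\sum_{e\in\gamma^*}Y_e]=\E[Y_0]\,\E[|\gamma^*|]$, and appeal directly to the exponential tail of $\rmD_p([0]_p,[\bef_1]_p)$ from Lemma \ref{Lem: large deviation of graph distance Dlambda}. The advantage of your approach is that it avoids summing over an entire box and the layer-cake over $k$; the paper's approach, while heavier-handed, needs no careful measurable selection of a geodesic. Either way the key inputs are identical: independence of $(Y_e)$ from $\mathcal{G}$ and the Antal--Pisztora tail bound for the chemical distance in supercritical percolation.
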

 \begin{proof} 
Define $X:=\inf \{m: \rmD^{\Lambda_m}_p([0]_p,[\bef_1]_p)<\infty\}$. If $X=k$, then there exists a $p$-open path in $\Lambda_k$ between $[0]_p$ and $[\bef_1]_p$, and thus $\rmTp(0,\bef_1) \leq \sum_{e \in \Lambda_k} \tau_e\1_{\tau_e < \infty}$. Let $\kE_k := \{X \geq k\}= \{ \rmD^{\Lambda_{k-1}}_p([0]_p,[\bef_1]_p)=\infty \}.$ Hence,
 \ben{ \label{etoe}
 \E[\rmT([0]_p,[\bef_1]_p)] \leq \sum_{k=1}^{\infty} \E \left[ \sum_{e \in \Lambda_k} \tau_e\1_{\tau_e < \infty} \1_{X=k} \right] \leq \sum_{k=1}^{\infty} \E \left[ \sum_{e \in \Lambda_k} \tau_e\1_{\tau_e < \infty} \1_{\kE_k} \right].
 }
 Since the event $\kE_k$ is measurable with $(\1_{\tau_e<\infty})_{e \in \kE(\Z^d)}$, we have 
 \bea{
 \E \left[ \tau_e\1_{\tau_e < \infty}\1_{\kE_k} \right] = \E \left[\tau_e\1_{\tau_e < \infty} \E\left[ \1_{\kE_k}\1_{\tau_e < \infty} \mid \tau_e \right] \right]\leq \E \left( \tau_e\1_{\tau_e < \infty}\right) \pr(\kE_k)/\pr(\tau_e < \infty).
 }
 By Lemma \ref{lem:hole} and Lemma \ref{Lem: large deviation of graph distance Dlambda}, there exists a positive constant $c$, such that 
 \bea{
 \pr(\kE_k) \leq \pr(\{[0]_p,[\bef_1]_p\} \not \subset \Lambda_{ck}) + \pr(\exists \, u,v \in \Lambda_{ck}: \rmD_p(u,v) \in (k/2, \infty))\leq c^{-1}\exp(-ck).
 }
 Combining this with \eqref{etoe} yields that 
 \be{
 \E[\rmT([0]_p,[\bef_1]_p)] \leq \sum_{k=1}^{\infty} (2k+1)^d (pc)^{-1}\exp(-ck) \E \left[ \tau_e\1_{\tau_e < \infty}\right] < \infty.
 }
 \end{proof}
\section*{Acknowledgements}
The authors thank Barbara Dembin for helpful discussions and suggestions for references.  V. H. Can is supported by the International Center for Research and Postgraduate Training in Mathematics, Institute of Mathematics, Vietnam Academy of Science and Technology under grant number $\textrm{ICRTM01}_{-}2024.01.$ V. Q. Nguyen was funded by the
Master, PhD Scholarship Programme of Vingroup Innovation Foundation (VINIF), code VINIF.2023.TS.096. S. Nakajima is supported by JSPS KAKENHI 22K20344.

\bibliographystyle{alpha}
\bibliography{citation} 
\end{document}